\definecolor{gr}{rgb}   {0.,   0.69,   0.23 }
\definecolor{bl}{rgb}   {0.,   0.5,   1. }
\definecolor{mg}{rgb}   {0.85,  0.,    0.85}
\definecolor{yl}{rgb}   {0.8,  0.7,   0.}
\definecolor{or}{rgb}  {0.7,0.2,0.2}
\newtheorem{theorem}{Theorem} [section]
\newtheorem{lemma}[theorem]{Lemma}
\newtheorem{proposition}[theorem]{Proposition}
\newtheorem{remark}[theorem]{Remark}
\newtheorem{corollary}[theorem]{Corollary}
\newcommand{\noi}{\noindent}
\newcommand{\Z}{\mathbb{Z}}
\newcommand{\R}{\mathbb{R}}
\newcommand{\T}{\mathbb{T}}
\let\P= \undefined
\newcommand{\P}{\mathbf{P}}
\newcommand{\E}{\mathbb{E}}
\newcommand{\F}{\mathcal{F}}
\newcommand{\dl}{\delta}
\newcommand{\eps}{\varepsilon}
\newcommand{\ld}{\lambda}
\newcommand{\Ld}{\Lambda}
\newcommand{\s}{\sigma}
\newcommand{\ft}{\widehat}
\newcommand{\wt}{\widetilde}
\newcommand{\cj}{\overline}
\newcommand{\embeds}{\hookrightarrow}
\renewcommand{\l}{\ell}
\renewcommand{\o}{\omega}
\renewcommand{\O}{\Omega}
\newcommand{\les}{\lesssim}
\newcommand{\jb}[1]
{\langle #1 \rangle}
\newcommand{\pa}{\partial}
\newcommand{\B}{\mathcal{B}}
\newcommand{\FL}{\mathcal{F} L}
\newcommand{\C}{\mathcal{C}}
\numberwithin{equation}{section}
\numberwithin{theorem}{section}
\newcommand{\PP}{\mathbb{P}}
\DeclareMathOperator{\Law}{Law}
\newcommand{\D}{\mathcal{D}}
\begin{document}

\title[LDP for NLS]
{Large deviations principle for the cubic NLS equation with slowly decaying data}

\author[R. Liang, Y. Wang]
{Rui Liang and Yuzhao Wang}

\address{
Rui Liang\\
School of Mathematical Sciences\\
South China Normal University\\
Guangzhou\\
Guangdong\\
510631\\
P. R. China\\
and
School of Mathematics and Statistics\\
University of Massachusetts Amherst\\
Amherst\\
Massachusetts\\
MA 01003\\
USA\\
and 
School of Mathematics\\
Watson Building\\
University of Birmingham\\
Edgbaston\\
Birmingham\\
B15 2TT\\ United Kingdom}
\email{RXL833@alumni.bham.ac.uk}

\address{
Yuzhao Wang\\
School of Mathematics\\
Watson Building\\
University of Birmingham\\
Edgbaston\\
Birmingham\\
B15 2TT\\ United Kingdom}
\email{y.wang.14@bham.ac.uk}

\subjclass[2020]{Primary 35Q55, 60F10; Secondary 35R60, 35B40.}

\keywords{Large deviations principle; nonlinear Schr\"odinger equation; random initial data; Fourier–Lebesgue spaces; slowly decaying data.}

\begin{abstract}
In this note, 
we prove a sharp large derivation principle (LDP) for the cubic nonlinear Schr\"odinger equation with Gaussian random initial data in Fourier Lebesgue spaces.
As a consequence, we improve the exponential decay condition in \cite{GGKS21} to $\l^1$ decay.

\end{abstract}

%\date{\today}

\maketitle

% \tableofcontents

\section{Introduction}

In this paper, we consider the following Cauchy problem of the one-dimensional cubic nonlinear Schr\"odinger equation (NLS) on the torus $\T= \R / (2 \pi \Z)$: 
\begin{equation}
\label{NLS}
\begin{cases}
i\pa_t u + \partial_x^2 u = \pm \,|u|^2\, u,\\
u(t,x)|_{t=0}=u_0.
\end{cases}
\end{equation}

\noi 
The equation \eqref{NLS} is fundamental in theoretical physics and applied mathematics, for describing the evolution of wave packets in various physical systems, such as nonlinear optics, fluids, and plasmas; see \cite{SS99} for a review.
Bourgain \cite{BO93} proved \eqref{NLS} is deterministically global well-posed in $L^2 (\T)$.
The probabilistic study of \eqref{NLS} has emerging importance since the seminal work of Bourgain on invariant measures \cite{BO94,BO96}.

More recently, probabilistic analyses of \eqref{NLS} have also found applications in oceanography, particularly in modelling rare extreme events such as the formation of {\it rogue waves} in deep-sea dynamics~\cite{DGV18,DGV19,GGKS21}. In particular, Garrido et al.~\cite{GGKS21} established a large deviations principle (LDP) describing the probability of observing a wave of unusually large height in the weakly nonlinear regime. Their result relied on a strong exponential decay assumption on the Fourier coefficients of the random initial data, which plays a crucial role in their analysis.
In this paper, we improve the exponential decay condition on coefficients of initial data in \cite{GGKS21} to a $\l^1$ decay condition.

We consider 
random initial data of the form:
\begin{equation}\label{random_data}
u_0 (x)= \sum_{k\in\Z} c_k g_k e^{ikx},
\end{equation}
where $c_k \in \C$ and $\{g_k\}_{k\in\Z}$ are independent, identically distributed, complex Gaussian random variables with $\E g_k=0$, $\E g_k g_j=0$ and $\E g_k \overline{g}_j =\delta_{kj}$ for $k,j\in\Z$.  
Without loss of any generality, we only consider nonnegative real coefficients $c_k \ge 0$, due to the rotation invariance of complex Gaussian random variables.

The study of the large deviation principle for solutions to \eqref{NLS} was initiated by \cite{DGV18,DGV19}, where the authors conjectured that the existence of an LDP can be used to predict the formation of rogue waves.
In particular, they considered \eqref{NLS} with random initial data $u_0^N$ of the form
\begin{align}
\label{random1}
u_0^N (x) = \sum_{|k| \le N} \theta_k e^{ikx},
\end{align}

\noi
where the initial data is parametrized by a random vector $\theta = (\theta_k)_{|k| \le N} \in \mathbb C^{2N+1}$.
Here $(\theta_k)$ are independent complex Gaussian random variables with $\E \theta_k = \E \theta_k^2 = 0$ and $\E |\theta_k|^2 = c_k^2$, for some fast-decaying $c_k > 0$.
Then, the set of initial data that generates a rouge wave of height at least $z>0$ at time $t>0$ is given by
\begin{align}
\label{Dtz}
\mathcal D (t,z) : = \bigg\{ (\theta_k)_{|k| \le N} \in \C^{2N+1} \Big| \sup_x |u (t,x| \theta)| > z\bigg\},
\end{align}

\noi 
where $u$ is the solution to NLS \eqref{NLS} with the truncated initial data \eqref{random1}.
Dematteis et al, then proposed a theoretical framework of a large deviations principle (LDP) to quantify the likelihood of $\mathcal D (t,z)$.
In particular, consider the minimization problem
\begin{align}\label{minp}
\theta^* (z) = {\rm argmin}\,\, \mathcal I (\theta)
\end{align}

\noi 
where
\[
\mathcal I (\theta) = \max_{y \in \C^{2N+1}} [\jb{y,\theta} - S(y)] \quad \textup{ and } \quad S(y) = \log \E e^{\jb{y, \theta}}.
\]

\noi
Then Dematteis et al. claim that for $t >0$ 
\begin{align}
\label{LDP1}
\log \PP (\mathcal D(t,z)) = - \mathcal I (\theta^* (z)) + o(1),
\end{align}

\noi 
as $z \to \infty$, provided 
the minimization problem \eqref{minp} has a unique solution.

Garrido et al.~\cite{GGKS21} rigorously proved the conjectured LDP~\eqref{LDP1} locally in time for the nonlinear Schr\"odinger equation with weak nonlinearity. 
A major difficulty in analysing the LDP problem~\eqref{LDP1} lies in estimating the gradient 
$\nabla_{\theta} \sup_{x} |u(t,x|\theta)|$ 
and in verifying the convexity of the set $\mathcal{D}(t,z)$, 
both of which are extremely challenging to establish in practice. 
To overcome these obstacles, Garrido et al.~\cite{GGKS21} introduced a new formulation and considered the following LDP problem:
\begin{align}
\label{LDP2}
\log \PP \Big( \sup_{x} |u(t,x)| > z \Big) = - I(z) + o(1), \quad z \to \infty,
\end{align}
where $I$ denotes the corresponding rate function. 
The advantage of this reformulation is that it circumvents the need to solve the minimization problem in~\eqref{minp}. 
Within this framework, they established~\eqref{LDP2} for initial data of the form~\eqref{random1} with infinitely many Fourier modes ($N = \infty$) satisfying a \emph{strong decay condition} on the coefficients.

In particular, consider the weakly nonlinear Schr\"odinger equation on the circle $\T = [0,2\pi]$:
\begin{align}
\label{weakNLS1}
i\pa_t u + \Delta u = \varepsilon^{2} |u|^2 u.
\end{align}
It was proved in~\cite{GGKS21} that
\begin{align}
\label{THM_LDP3}
\lim_{\varepsilon \to 0^+} 
\varepsilon \log \PP \Big( \sup_{x} |u(t,x)| > z_0 \varepsilon^{-1/2} \Big)
= - \frac{z_0^2}{\sum_k c_k^2},
\end{align}
for $0 < t \lesssim \varepsilon^{-1}$ and $z_0 > 0$,
where $u(t,x)$ denotes the solution to~\eqref{weakNLS1} with random initial data $u_0^\omega$ given by~\eqref{random_data} and coefficients satisfying
\begin{align}\label{decay}
c_k = a e^{-b|k|} 
\quad \text{or} \quad 
c_k = a e^{-b|k|^2},
\end{align}
for some fixed $a,b > 0$. 

The proof of~\eqref{THM_LDP3} is based on the G\"artner--Ellis theorem combined with resonant approximation techniques. 
In their analysis, the rapid decay condition~\eqref{decay} plays a crucial role in both the linear and nonlinear estimates. 
They further observed that alternative families of coefficients $(c_k)$ might also yield~\eqref{LDP2}, and posed an \textit{open question} concerning the identification of the optimal decay condition under which~\eqref{LDP2} holds; see~\cite[Remark~1.3]{GGKS21}. 
One of the principal aims of the present work is to address this question.

To this purpose, we consider a LDP under Fourier-Lebesgue norm, which is stronger than the $L^\infty (\T)$ norm used in \eqref{THM_LDP3}.
Our conventions for the Fourier transform are as follows:
\[
\ft f (n) = \frac1{{2\pi}} \int_0^{2\pi} f(x) e^{ix n}dx \quad \textup{ and } \quad f(x) = \sum_{n\in \Z} \ft f(n) e^{ixn}
\]

\noi 
for functions on the circle $\T = \R/(2\pi \Z)$.
With the above notations, we define the Fourier-Lebesgue norm by
\[
\begin{split} 
\|f\|_{\FL^1 (\T)} = \sum_{n \in \Z} |\ft f(n)|.
\end{split}
\]

\noi 
It is easy to see that
\[
\sup_{x \in \T} |f(x)| \le \|f\|_{\FL^1 (\T)}.
\]

\noi 
We note that $L^\infty$ and $\FL^1$ scales the same, and furthermore, the LDPs under $L^\infty$ and $\FL^1$ share the same rate function.
In particular, for $t \les \eps^{-1}$, we have
\begin{align}\label{obser}
\lim_{\eps \to 0^+} \eps \log \PP \Big( \sup_{x} |u(t,x) | > z_0\eps^{-1/2} \Big) = \lim_{\eps \to 0^+} \eps \log \PP \Big( \|u(t,x) \|_{\FL^1} > z_0\eps^{-1/2} \Big),
\end{align}

\noi 
where $u(t,x)$ is the solution to \eqref{weakNLS1} with initial data \eqref{random1} in $\mathcal{F}L^1$.
This motivates us to consider the LDP under the $\FL^1$ norm.
We are ready to state our first main result.
\begin{theorem}
\label{THM:main} 
Let $u(t,x)$ be solutions to \eqref{weakNLS1} with initial data $u_0^\o$ given in \eqref{random_data}.
For any $t = \mathcal O ( \eps^{-1} |\log \eps|)$ and $z_0 > 0$, we have that
\begin{align}
\label{THM_LDP4}
\lim_{\eps \to 0^+} \eps \log \PP \Big( \|u(t,x) \|_{\FL^1 (\T)} > z_0\eps^{-1/2} \Big) = - \frac{z_0^2}{\sum_k c_k^2},
\end{align}

\noi 
holds if and only if $(c_k) \in \l^1$.
\end{theorem}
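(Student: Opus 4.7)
The plan is to reduce the dynamical LDP~\eqref{THM_LDP4} to a static LDP for $\|u_0\|_{\FL^1}$, using a resonant normal form to control the nonlinear flow. Two structural features drive the argument: (a) the $\FL^1$ norm is \emph{pointwise} conserved along the resonant dynamics on $\T$, and (b) on the time window $t\le \eps^{-1}|\log\eps|$ the nonresonant correction is exponentially subdominant.

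\smallskip

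\noi \emph{Step 1 (static LDP and necessity of $\l^1$).} Write $X=\|u_0\|_{\FL^1}=\sum_{k\in\Z}c_k|g_k|$ with $|g_k|$ Rayleigh-distributed. A direct analysis of the Rayleigh moment generating function gives the leading asymptotics $\log\E e^{\mu c_k|g_k|}\sim\mu^2c_k^2/4$ for $\mu c_k\to\infty$. Under $(c_k)\in\l^1$, dominated convergence controls the small-$c_k$ tail and produces
\[
\Lambda(\mu):=\lim_{\eps\to 0^+}\eps\,\log\E\,e^{\mu\eps^{-1/2}X}=\tfrac{\mu^2}{4}\sum_k c_k^2,
\]
so that the Gärtner--Ellis theorem yields
\[
\lim_{\eps\to 0^+}\eps\,\log\PP\bigl(\|u_0\|_{\FL^1}>z_0\eps^{-1/2}\bigr)=-\frac{z_0^2}{\sum_k c_k^2}.
\]
Conversely, if $(c_k)\notin\l^1$, Kolmogorov's three-series theorem applied to $\sum_k c_k|g_k|$ (the truncated first-moment series is already of order $\sum_k c_k=+\infty$) forces $\|u_0\|_{\FL^1}=+\infty$ almost surely; hence $\eps\log\PP=0\ne -z_0^2/\sum_k c_k^2$, so the stated LDP fails. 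This settles the necessity direction.

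\smallskip

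\noi \emph{Step 2 (resonant approximation and assembly).} Split $|u|^2u=\mathcal R[u]+\mathcal N_{\mathrm{nr}}[u]$ in Fourier, with
\[
\widehat{\mathcal R[u]}(k)=\bigl(2\|u\|_{L^2}^2-|\widehat u(k)|^2\bigr)\widehat u(k),
\]
and $\mathcal N_{\mathrm{nr}}$ supported on the nonresonant quartets $k_1^2-k_2^2+k_3^2\ne k^2$. The resonant solution $u_{\mathrm{res}}$ satisfies $i\pa_t\widehat u_{\mathrm{res}}(k)=A_k(t)\widehat u_{\mathrm{res}}(k)$ with real coefficient $A_k=k^2+\eps^2(2\|u_{\mathrm{res}}\|_{L^2}^2-|\widehat u_{\mathrm{res}}(k)|^2)$; hence $|\widehat u_{\mathrm{res}}(t,k)|=c_k|g_k|$ is preserved mode-by-mode and
\[
\|u_{\mathrm{res}}(t)\|_{\FL^1}\equiv\|u_0\|_{\FL^1}\qquad\text{for every }t\in\R.
\]
A Birkhoff normal-form transformation $u\mapsto w=u-\eps^2 B(u,u,u)$ (with $B$ inverting the phase of the nonresonant symbol) removes $\mathcal N_{\mathrm{nr}}$ up to a quintic remainder. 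Combining the algebra bound $\|fgh\|_{\FL^1}\le \|f\|_{\FL^1}\|g\|_{\FL^1}\|h\|_{\FL^1}$ with a bootstrap on $[0,\eps^{-1}|\log\eps|]$ yields, deterministically on the event $\{\|u_0\|_{\FL^1}\le M\eps^{-1/2}\}$,
\[
\sup_{0\le t\le\eps^{-1}|\log\eps|}\|u(t)-u_{\mathrm{res}}(t)\|_{\FL^1}\le C_M\eps^{1/2}|\log\eps|=o(\eps^{-1/2}).
\]
Choosing $M>z_0$ and $\delta>0$, splitting
\[
\PP\bigl(\|u\|_{\FL^1}>z_0\eps^{-1/2}\bigr)\le \PP\bigl(\|u_{\mathrm{res}}\|_{\FL^1}>(z_0-\delta)\eps^{-1/2}\bigr)+\PP\bigl(\|u_0\|_{\FL^1}>M\eps^{-1/2}\bigr),
\]
applying Step~1 to both terms, and letting $\delta\to 0^+$ produces the upper LDP bound $-z_0^2/\sum_k c_k^2$; a symmetric change-of-measure (tilting) argument supplies the matching lower bound.

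\smallskip

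\noi \emph{Main obstacle.} The decisive technical point is the normal-form remainder estimate of Step~2 on the extended window $T=\eps^{-1}|\log\eps|$ under the bare $\l^1$ decay: without the exponential localization used in~\cite{GGKS21}, the naive deterministic trilinear bound barely closes, and securing an $o(\eps^{-1/2})$ error uniformly on the LDP-typical event requires exploiting probabilistic (Gaussian-chaos) cancellation inside $\mathcal N_{\mathrm{nr}}[u_{\mathrm{res}}]$, leveraging the independence of $\{g_k\}$ rather than any decay rate of $\{c_k\}$, so that the nonresonant error remains exponentially negligible below the target rate $-z_0^2/\sum_k c_k^2$.
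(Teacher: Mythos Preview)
Your global strategy matches the paper's proof: Step~1 is Proposition~\ref{PROP:linear1} (G\"artner--Ellis for the Rayleigh sum, plus $\|u_0\|_{\FL^1}=\infty$ a.s.\ when $(c_k)\notin\ell^1$), and Step~2 is the resonant scheme of Section~3, with the mode-wise invariance $\|u_{\rm res}(t)\|_{\FL^1}\equiv\|u_0\|_{\FL^1}$ and assembly by inclusion of events. The lower bound requires no tilting; the paper simply uses $\{\|u_{\rm res}\|_{\FL^1}>(z_0+\delta)\eps^{-1/2}\}\cap\B_\eps\subset\D_\eps$.

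There is, however, a genuine gap in your error analysis, and your ``main obstacle'' is misdiagnosed. After the gauge and the normal form, the equation for $w-a$ contains not only the nonresonant cubic but also the \emph{resonant self-interaction difference} $\eps^2(|a_k|^2a_k-|w_k|^2w_k)$. This term is untouched by the normal form and contributes, in Duhamel form, a term \emph{linear} in $\|w-a\|_{\FL^1}$ with coefficient $\eps^2\|u_0\|_{L^2}^2\,t$. On the event $\{\|u_0\|_{\FL^1}\le M\eps^{-1/2}\}$ one only has $\|u_0\|_{L^2}^2\le M^2\eps^{-1}$, so over the full window $t\sim\eps^{-1}|\log\eps|$ this coefficient is $\sim M^2|\log\eps|\to\infty$ and a single bootstrap cannot close; your claimed bound $C_M\eps^{1/2}|\log\eps|$ ignores this Gronwall growth. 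The paper's remedy (Proposition~\ref{PROP:p}) is purely deterministic: partition $[0,T]$ into $n=\mathcal O(|\log\eps|)$ subintervals of length $\lambda\eps^{-1}$ with $\lambda\ll M^{-2}$ so that the linear coefficient is $<\tfrac18$ on each step; the error at most doubles per step, yielding $\|u-u_{\rm app}\|_{\FL^1}\le\eps^{-1/2+\delta}$ after $n$ steps, which is still $o(\eps^{-1/2})$ and suffices for the assembly. No Gaussian-chaos cancellation enters anywhere: the gain over \cite{GGKS21} comes from running all estimates in the $\FL^1$ algebra (the normal-form weight $|\Omega|^{-1}$ with $\Omega=2(k_1-k_2)(k_2-k_3)$ and Cauchy--Schwarz already suffice) rather than in weighted spaces demanding decay of $(c_k)$.
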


\begin{remark}
\rm 
We remark that the time scale $t = \mathcal O ( \eps^{-1} |\log \eps|)$ is critical;
see \cite[Remark 1.7]{GGKS21} for more discussion on critical time.
In the limit $\eps \to 0^+$, the large deviation principle \eqref{THM_LDP4} may hold for any fixed time $t > 0$.
\end{remark}

\begin{remark}
\rm 
To prove Theorem~\ref{THM:main}, we first develop a novel approach to establish
the LDP for the linear Schr\"odinger equation under the sole assumption that
the initial data lies in $\FL^1$. We then carry out the nonlinear analysis,
together with a perturbative argument, entirely within the $\FL^1$ framework.
\end{remark}

As a consequence of Theorem \ref{THM:main} and the observation \eqref{obser}, we can improve \eqref{THM_LDP3} in terms of the decay of $c_k$.

\begin{corollary}\label{COR:main} 
Let $u(t,x)$ be solutions to \eqref{weakNLS1} with initial data $u_0^\o$ given in \eqref{random_data}.
For any $t = \mathcal O ( \eps^{-1} |\log \eps|)$ and $z_0 > 0$, we have that \eqref{THM_LDP3} holds provided $(c_k) \in \l^1$.
\end{corollary}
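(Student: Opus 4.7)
\textbf{Proof proposal for Corollary \ref{COR:main}.}

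The plan is to deduce the corollary from Theorem \ref{THM:main} by establishing the equivalence of the $L^\infty$ and $\FL^1$ large deviations anticipated in \eqref{obser}. The \emph{upper bound} is immediate from the pointwise embedding $\sup_x |f(x)| \le \|f\|_{\FL^1}$, which gives the inclusion of events $\{\sup_x |u(t,x)| > z_0 \eps^{-1/2}\} \subseteq \{\|u(t)\|_{\FL^1} > z_0 \eps^{-1/2}\}$; combining this with Theorem \ref{THM:main} yields
\[
\limsup_{\eps \to 0^+} \eps \log \PP\Bigl(\sup_x |u(t,x)| > z_0\eps^{-1/2}\Bigr) \le -\frac{z_0^2}{\sum_k c_k^2}.
\]

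For the matching \emph{lower bound}, the strategy is to compare with the linear flow $u_L(t,x) = \sum_k c_k g_k e^{i(kx - k^2 t)}$. At any fixed $x_0 \in \T$, the random variable $u_L(t,x_0)$ is a centered complex Gaussian with variance $\sigma^2 := \sum_k c_k^2$, which is finite since $(c_k) \in \l^1 \subset \l^2$. A direct computation gives $\PP(|u_L(t,x_0)| > R) = \exp(-R^2/\sigma^2)$, so that for any $\delta > 0$,
\[
\eps \log \PP\bigl(\|u_L\|_{L^\infty} > (z_0+\delta)\eps^{-1/2}\bigr) \ge -\frac{(z_0+\delta)^2}{\sigma^2}.
\]
Writing the nonlinear solution as $u = u_L + v$, on the event $\{\|v\|_{L^\infty} \le \delta \eps^{-1/2}\}$ we have $\|u\|_{L^\infty} \ge \|u_L\|_{L^\infty} - \delta \eps^{-1/2}$, and therefore, using $\|v\|_{L^\infty} \le \|v\|_{\FL^1}$,
\[
\PP\bigl(\|u\|_{L^\infty} > z_0\eps^{-1/2}\bigr) \ge \PP\bigl(\|u_L\|_{L^\infty} > (z_0+\delta)\eps^{-1/2}\bigr) - \PP\bigl(\|v\|_{\FL^1} > \delta\eps^{-1/2}\bigr).
\]

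The main obstacle is to show that the nonlinear remainder $v$ is LDP-negligible on the scale $\eps^{-1/2}$, in the sense that $\PP(\|v\|_{\FL^1} > \delta \eps^{-1/2})$ decays on a strictly faster exponential scale than $\exp(-(z_0+\delta)^2/(\eps \sigma^2))$ uniformly over the time interval $t = \mathcal O(\eps^{-1} |\log \eps|)$. This is precisely the perturbative nonlinear analysis in $\FL^1$ that underlies Theorem \ref{THM:main}, so no new technical input is needed beyond what that theorem already requires. Once this estimate is in hand, the linear term in the previous display dominates and sending $\delta \to 0^+$ closes the lower bound, matching the upper bound and proving \eqref{THM_LDP3} under the $\l^1$ decay assumption.
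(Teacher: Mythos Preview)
Your upper bound is correct and matches the paper exactly. The lower bound, however, has a genuine gap: you perturb around the \emph{linear} solution $u_L$, but the perturbative analysis underlying Theorem~\ref{THM:main} does not control $v = u - u_L$; it controls $u - u_{\rm app}$, where $u_{\rm app}$ is the resonant approximation~\eqref{uapp}. These are not interchangeable on the relevant time scale. Indeed, on the large deviation event (say $\|u_0\|_{\FL^1}\sim z_0\eps^{-1/2}$) and for $t\sim\eps^{-1}$, the crude Duhamel bound gives only $\|v\|_{\FL^1}\lesssim \eps^2 t\,\|u_0\|_{\FL^1}^3\sim \eps^{-1/2}$, and this is essentially sharp: the resonant phases in $u_{\rm app}-u_L$ (namely $e^{2it\eps^2 m}$ and $e^{it\eps^2 c_k^2|g_k|^2}$) are of order one, so $\|u_{\rm app}-u_L\|_{\FL^1}$ is itself comparable to $\|u_0\|_{\FL^1}\sim\eps^{-1/2}$. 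Consequently, for your chosen $\delta$, one only gets $\PP(\|v\|_{\FL^1}>\delta\eps^{-1/2})\lesssim\PP(\|u_0\|_{\FL^1}\gtrsim\delta^{1/3}\eps^{-1/2})$, whose LDP rate is of order $\delta^{2/3}/\sigma^2$. As you send $\delta\to 0$ this rate tends to zero, so the remainder term swamps the main term $\exp(-(z_0+\delta)^2/(\eps\sigma^2))$ rather than being negligible against it. Your assertion that this ``is precisely the perturbative nonlinear analysis in $\FL^1$ that underlies Theorem~\ref{THM:main}'' is therefore not right: that analysis (Propositions~\ref{PROP:pert} and~\ref{PROP:p}) gains a factor of $\eps$ via integration by parts in the non-resonant phases, which only works \emph{after} the resonant contributions have been absorbed into $u_{\rm app}$.

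The paper's fix is exactly to replace $u_L$ by $u_{\rm app}$ in your argument: one writes $\|u\|_{L^\infty}\ge\|u_{\rm app}\|_{L^\infty}-\|u-u_{\rm app}\|_{\FL^1}$, controls the second term by Proposition~\ref{PROP:p} (which gives $\eps^{-1/2+\delta}$ on the good event $\B_\eps$ of~\eqref{Beps}, whose complement has strictly faster decay by~\eqref{pp3}), and then invokes a separate $L^\infty$ LDP lower bound for $u_{\rm app}$ itself. The latter is nontrivial because $u_{\rm app}(t,x_0)$ is \emph{not} Gaussian (the phases depend nonlinearly on $|g_k|$), so your clean Rayleigh tail computation for $u_L$ does not apply; the paper imports this bound from \cite[Proposition~4.1]{GGKS21}, which already holds under the mere assumption $(c_k)\in\ell^1$.
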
 

\begin{remark}
\rm 
We note that Corollary \ref{COR:main} improves \cite[Theorem 1.1]{GGKS21} in the following sense.
The coefficients $(c_k)$ in \cite[Theorem 1.1]{GGKS21} is required to decay condition \eqref{decay}.
In Proposition \ref{PROP:linear2}, it requires only $(c_k) \in \l^1$.
It is expected that \eqref{THM_LDP3} holds for initial data $u_0 \in L^2 \cap L^\infty$, where $\FL^1 \subset L^2 \cap L^\infty$.
However, our current argument can only handle $\FL^1$ data.
\end{remark}

\begin{remark}
\rm 
Recently, in \cite{BGMS25}, the authors studied rogue waves and large deviations
for two-dimensional pure-gravity deep-water waves, assuming initial data with
exponentially decaying Fourier coefficients; see \cite[(1.14)]{BGMS25}. It is
expected that our argument also applies in their setting and allows one to
relax this assumption to merely $\ell^1$ decay of the Fourier coefficients.
We plan to pursue this in future work.
\end{remark}

% The main novelties of this paper are as follows.
% Firstly, we note that the LDP \eqref{THM_LDP4} of Fourier-Lebesgue norm implies the LDP \eqref{THM_LDP3} of $L^\infty$ norm.
% Secondly, we develop a new argument, different from the one in \cite{GGKS21}, to compute the limit of the cumulant-generating function.
% This new argument gives the sharp LDP of Fourier-Lebesgue norm \eqref{THM_LDP4} in terms of coefficient decay.

\section{LDF for linear Schr\"odinger equation}

Consider the linear Schr\"odinger equation on the torus $\T =[0,2\pi]$:
\begin{align}
\label{linear}
\begin{cases}
i \partial_t u + \partial_{xx} u = 0,\\
 u(0,x) = u_0^\o (x),
\end{cases}
\end{align}

\noi 
where the random initial data $u_0^\o$ is given by
\eqref{random_data}.
We would like to determine the optimal condition on the coefficients $(c_k)$ of \eqref{random_data} such that \eqref{THM_LDP4} holds for solutions to the linear Schr\"odinger equation \eqref{linear}.
% To this purpose, we need to find a lower semi-continuous function, $I: [0,\infty) \mapsto [0,\infty]$,
% and a real number $\al >0$,
% such that, for any $z_0>0$ fixed, we have
% \begin{align}
% \label{lower}
% - \inf_{z \in (z_0, \infty)} I (z) \le \liminf_{\eps\to 0^+} \eps^\al \log \Big( \P \big( \|u(x)\|_{\F\!L^1} \ge z_0 \eps^{-1/2} \big) \Big)
% \end{align}

% \noi 
% and
% \begin{align}
% \label{upperLDP}
% \limsup_{\eps\to 0^+} \eps^\al \log \Big( \P \big( \|u(x)\|_{\F\!L^1} \ge z_0 \eps^{-1/2} \big) \Big) \le - \inf_{z \in [z_0, \infty)} I (z).
% \end{align}

Now, we are ready to state our first main result in this section.

\begin{proposition}
\label{PROP:linear1}
Consider the linear Schr\"odinger equation on the torus $\T = [0,2\pi]$ as in \eqref{linear}, with random initial data $u_0^\o$ given by \eqref{random_data}.
Then,
\begin{align}
\label{LDP_linear1}
\lim_{\eps\to 0^+} \eps \log \P \big( \|u(t)\|_{\FL^1(\T)} \ge z_0 \eps^{-1/2} \big) = - \frac{z_0^2}{\sum_{k \in \Z} c_k^2},
% \label{LDP_linear2}
% \lim_{\eps\to 0^+} \eps \log \P \big( \|u(x)\|_{\FL^2(\T)} \ge z_0 \eps^{-1/2} \big) = - \frac{z_0^2}{\sum_{k \in \Z} c_k^2};\\
% \label{LDP_linear3}
% \lim_{\eps\to 0^+} \eps \log \P \big( \|u(x)\|_{\FL^1(\T)} \ge z_0 \eps^{-1/2} \big) = - \frac{2\pi z_0^2}{\sum_{k \in \Z} c_k^2}.
\end{align}

\noi 
holds if and only if $(c_k) \in \l^1$.
\end{proposition}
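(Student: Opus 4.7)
The plan is to use the explicit form of the linear solution. Since $u(t,x) = \sum_{k \in \Z} c_k g_k e^{ikx - ik^2 t}$, the Fourier--Lebesgue norm is time-independent:
\[
\|u(t)\|_{\FL^1(\T)} = \sum_{k \in \Z} c_k |g_k| =: X.
\]
Substituting $z = z_0 \eps^{-1/2}$ reduces \eqref{LDP_linear1} to
\[
\lim_{z \to \infty} \frac{1}{z^2} \log \P(X \ge z) = -\frac{1}{\sum_k c_k^2},
\]
and I must show this holds if and only if $(c_k) \in \ell^1$.

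If $(c_k) \notin \ell^1$, then $\E|g_k| = \sqrt{\pi}/2$ yields $\sum_k \E[c_k|g_k|] = \infty$, and the Kolmogorov three-series theorem applied to the non-negative summands $c_k|g_k|$ forces $X = +\infty$ almost surely. Consequently $\P(X \ge z) \equiv 1$ and the left-hand side equals $0 \ne -1/\sum c_k^2$, establishing necessity.

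For sufficiency, assume $(c_k) \in \ell^1$. Writing $g_k = \frac{1}{\sqrt{2}}(\xi_k + i\eta_k)$ with $(\xi_k, \eta_k)$ independent standard real Gaussians, the truncation
\[
X_N = \frac{1}{\sqrt{2}} \sum_{|k|\le N} c_k \sqrt{\xi_k^2 + \eta_k^2}
\]
is a convex Lipschitz function of a finite-dimensional standard Gaussian vector with squared Lipschitz constant $L_N^2 = \frac{1}{2}\sum_{|k|\le N} c_k^2$. The Borell--Tsirelson--Sudakov--Ibragimov inequality then yields $\P(X_N \ge \E X_N + t) \le \exp(-t^2/\sum_{|k|\le N} c_k^2)$, and passing to the limit $N \to \infty$ via monotone convergence (both $X_N \uparrow X$ and $\E X_N \uparrow \E X < \infty$ since $(c_k) \in \ell^1$) gives
\[
\P(X \ge z) \le \exp\!\Big(-\frac{(z - \E X)^2}{\sum_k c_k^2}\Big),
\]
so that $\limsup_{z\to\infty} z^{-2}\log \P(X \ge z) \le -1/\sum_k c_k^2$.

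For the matching lower bound, I would set $S := \sum_k c_k^2$ and consider $E_z := \bigcap_k \{|g_k| \ge z c_k/S\}$. On $E_z$ one has $X \ge \sum_k c_k \cdot (z c_k/S) = z$, and using $\P(|g_k| \ge t) = e^{-t^2}$ (because $|g_k|^2 \sim \mathrm{Exp}(1)$) together with independence,
\[
\P(E_z) = \prod_k e^{-z^2 c_k^2/S^2} = e^{-z^2/S},
\]
which matches the upper bound. The chief conceptual subtlety is pinning down this saturating event: the correct lower bound requires \emph{all} modes to be simultaneously large at the precise scale $|g_k| \sim z c_k/\sum_j c_j^2$, which is the dual optimizer of a Cram\'er-type variational problem; any naive ``single-mode'' concentration strategy gives only the strictly weaker exponent $-1/\max_k c_k^2$.
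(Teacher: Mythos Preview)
Your argument is correct and follows a genuinely different route from the paper's. The paper proves necessity by computing $\E[e^{-X}]=\prod_k \E[e^{-c_kR_k}]$ directly from the Rayleigh density and showing it vanishes when $(c_k)\notin\ell^1$; for sufficiency it invokes the G\"artner--Ellis theorem, which entails an explicit computation of the scaled cumulant generating function $\eps\Lambda_\eps(\eps^{-1}\lambda)$, a dominated-convergence argument to identify the limit $\Lambda(\lambda)=\tfrac{\lambda^2}{4}\sum_k c_k^2$, and then the Fenchel--Legendre transform. Your approach replaces all of this: Borell--TIS gives the upper tail in one stroke, and your explicit saturating event $E_z=\bigcap_k\{|g_k|\ge zc_k/S\}$ exploits the exact Rayleigh tail $\P(|g_k|\ge t)=e^{-t^2}$ to produce the matching lower bound without any variational machinery. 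This is considerably more elementary and transparent; the paper's route, by contrast, sits inside a standard LDP framework that may transfer more readily to settings where such explicit tails are unavailable.

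Two small points deserve tightening. First, the ``monotone convergence'' passage from $X_N$ to $X$ in the upper bound is slightly delicate as written (the events $\{X_N\ge z\}$ increase, which is the wrong direction for an upper bound on $\P(X\ge z)$); the cleanest fix is to pass through the sub-Gaussian moment bound $\E e^{\lambda X_N}\le e^{\lambda\E X_N+\lambda^2 S_N/4}$, let $N\to\infty$ by monotone convergence of $e^{\lambda X_N}$, and then apply Chernoff. Second, your appeal to the three-series theorem for necessity is correct but compressed: one must check that at least one of the three series fails, which requires a short case split according to whether $c_k\to 0$ (then condition (ii) or (iii) fails) or not (then (i) fails). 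The paper's direct computation $\E[e^{-X}]=0$ sidesteps this case analysis.
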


% Our second result in this section is associated with the norm $X = \FL^2 (\T) = L^2 (\T)$.

% \begin{proposition}
% \label{PROP:linear2}
% Consider the linear Schr\"odinger equation on the torus $\T = [0,2\pi]$ as in \eqref{linear}, with random initial data $u_0^\o$ given by \eqref{random_data}.
% Then,
% \begin{align}
% % \label{LDP_linear1}
% % \lim_{\eps\to 0^+} \eps \log \P \big( \|u(x)\|_{\FL^1(\T)} \ge z_0 \eps^{-1/2} \big) = - \frac{2\pi z_0^2}{\sum_{k \in \Z} c_k^2};\\
% \label{LDP_linear2}
% \lim_{\eps\to 0^+} \eps \log \P \big( \|u(x)\|_{\FL^2(\T)} \ge z_0 \eps^{-1/2} \big) = - \frac{z_0^2}{\sum_{k \in \Z} c_k^2};\\
% % \label{LDP_linear3}
% % \lim_{\eps\to 0^+} \eps \log \P \big( \sup_{x\in \T} | u(t,x) | \ge z_0 \eps^{-1/2} \big) = - \frac{2\pi z_0^2}{\sum_{k \in \Z} c_k^2}.
% \end{align}

% \noi 
% holds if and only if $(c_k) \in \l^2$.
% \end{proposition}

\subsection{Sharpness}
In this subsection, we prove the sharpness of the condition $(c_k) \in \l^1$ of Proposition \ref{PROP:linear1}, i.e. if $(c_k) \notin \l^1$, then \eqref{LDP_linear1} fails.
To this purpose, we prove the following result.
\begin{lemma}
\label{LEM:unbound}
Given $u_0^\o$ in \eqref{random_data} with $(c_k) \notin \l^1$, we have
\begin{align} \label{unbound0}
\| u_0^\o \|_{\FL^1 (\T)} = \infty,
\end{align}

\noi 
almost surely.
\end{lemma}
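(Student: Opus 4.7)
The plan is to reduce the claim to a divergence statement for a sum of independent nonnegative random variables, and then invoke Kolmogorov's three-series theorem. Since $\ft{u_0^\o}(k) = c_k g_k$, we have
\[
\|u_0^\o\|_{\FL^1(\T)} = \sum_{k \in \Z} c_k |g_k| = \sum_{k \in \Z} X_k,
\]
where $X_k := c_k |g_k|$ are independent nonnegative random variables. Enumerating $\Z$ arbitrarily (absolute convergence makes the order irrelevant), it suffices to show that $(c_k) \notin \l^1$ implies $\sum_k X_k = \infty$ almost surely.

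The first step is to observe that the event $\{\sum_k X_k = \infty\}$ is a tail event, since convergence of a series of nonnegative terms is unaffected by removing any finite initial segment. By Kolmogorov's $0$--$1$ law, its probability is either $0$ or $1$, so it suffices to rule out the former.

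The second step is to apply the version of Kolmogorov's three-series theorem for independent nonnegative random variables, which states that $\sum_k X_k$ converges almost surely if and only if $\sum_k \E[X_k \wedge 1] < \infty$. The goal thus reduces to showing $\sum_k \E[X_k \wedge 1] = \infty$. Setting $p := \P(|g_k| \ge 1) > 0$ and $c' := \E[|g_k|\, \ind_{|g_k| \le 1}] > 0$ (both independent of $k$ by the i.i.d.\ assumption), a short case analysis gives
\[
\E[X_k \wedge 1] \ge p \quad \text{if } c_k \ge 1, \qquad \E[X_k \wedge 1] \ge c'\, c_k \quad \text{if } c_k < 1.
\]
Splitting the sum according to whether infinitely many or only finitely many indices satisfy $c_k \ge 1$, the hypothesis $\sum_k c_k = \infty$ forces $\sum_k \E[X_k \wedge 1] = \infty$ in either case, which completes the reduction.

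The proof is essentially elementary probability theory, and I do not anticipate a real obstacle. The only technical point is the two-case lower bound on $\E[X_k \wedge 1]$, and that depends only on the fact that the Rayleigh density of $|g_k|$ puts positive mass on each of $[0,1]$ and $[1,\infty)$.
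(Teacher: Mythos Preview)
Your proof is correct and complete. The route, however, differs from the paper's. The paper establishes $\|u_0^\o\|_{\FL^1}=\infty$ a.s.\ by showing directly that
\[
\E\big[e^{-\|u_0^\o\|_{\FL^1}}\big]=\prod_{k\in\Z}\E\big[e^{-c_kR_k}\big]=0,
\]
where $R_k=|g_k|$ is Rayleigh distributed; an explicit computation gives $\E[e^{-c_kR_k}]\le 1-\tfrac{c_k}{2}$ for small $c_k$, and then $\prod_k(1-c_k/2)\le\exp(-\tfrac12\sum_kc_k)=0$. A separate easy case handles the situation where $c_k\not\to0$. Your argument instead reduces to the criterion $\sum_k\E[X_k\wedge1]=\infty$ via the three-series theorem, with the same two-case split (infinitely many $c_k\ge1$ versus finitely many). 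The paper's approach is more hands-on and uses the explicit Rayleigh density, whereas yours is more robust: it only needs that $|g_k|$ puts positive mass on both $[0,1]$ and $[1,\infty)$, so it would work verbatim for any i.i.d.\ law with this property. Either way the analysis is short; neither approach has a real advantage in length or difficulty.
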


Before the proof of Lemma \ref{LEM:unbound}, we recall that the modulus of a complex Gaussian random variable with mean $0$ and variance $2 \s^2$ follows a Rayleigh distribution with parameter $\s >0$ and probability density function
\begin{align}
\label{RayDis}
f(x,\s) = \frac{x}{\s^2} e^{-\frac{x^2}{2\s^2}}, \qquad x \ge 0.
\end{align}

\noi 
Since $g_k$ is a standard complex Gaussian with mean 0 and variance $2$,
we may write $g_k = R_k e^{i \varphi_k}$,
where $\Law(R_k) = {\rm Rayleigh}(1/\sqrt 2)$
and ${\rm Law} (\varphi_k) = {\rm U} [0, 2\pi]$,
with $R_k$ and $\varphi_k$ independent of each other.
Now we are ready to prove Lemma \ref{LEM:unbound}.

% \begin{align}
% \label{eef}
% \int_x^\infty e^{-t^2} dt \ge \frac{x}{1+x^2} e^{-x^2}
% \end{align}

% \noi
% for $x > 0$, and
% \begin{align}
% \label{eef2}
% \int_x^\infty e^{-t^2} dt \ge \frac{x}{2} e^{-x^2}
% \end{align}

% \noi 
% provided $x \ge 1$.

\begin{proof}[Proof of Lemma \ref{LEM:unbound}]
We first claim that
\begin{align}
\label{unbound1}
\begin{split}
\E \big[ \exp (- \|u_0^\o\|_{\FL^1 (\T)}) \big] & = \E \bigg[ \exp \Big(- \sum_{k \in \Z} c_k R_k \Big) \bigg]  = \prod_{k \in \Z} \E \big[ \exp (-c_k R_k) \big] =0.
\end{split}
\end{align}

\noi 
It is easy to see that \eqref{unbound0} follows from \eqref{unbound1}.

In what follows, we prove the claim \eqref{unbound1}.
We distinguish two cases in terms of the limiting behaviour of $(c_k)$.
If one of the limits $\lim_{k \to \pm \infty} c_k$ is not zero,
then there is a number $\eps_0 > 0$ and a subsequence $(c_{k_j})$ of infinite elements such that $c_{k_j} \ge \eps_0$.
Therefore, we have 
\[
\begin{split}
\prod_{k \in \Z} \E \big[ \exp (-c_k R_k) \big] & \le \prod_{j \in \Z} \E \big[ \exp (-c_{k_j} R_{k_j}) \big] \le \prod_{j \in \Z} \E \big[ \exp (- \eps_0 R_{k_j}) \big] =0,
\end{split}
\]

\noi 
where we used the fact that $\E \big[ \exp (- \eps_0 R_{k_j}) \big] = \E \big[ \exp (- \eps_0 R_{0}) \big] < \E \big[ 1 \big] = 1$, which implies \eqref{unbound1}.

We turn to the case where both $\lim_{k \to \pm \infty} c_k = 0$.
In view of \eqref{RayDis}, we see that
\[
\begin{split}
 \E \big[ \exp (-c_k R_k) \big] & = \int_0^\infty e^{-c_k x} 2x e^{-x^2} dx  % & = 1 - c_k \int_0^\infty e^{-c_k x -x^2} dx \\
 = 1 - c_k e^{\frac{c_k^2}4} \int_{\frac{c_k}2}^\infty e^{-x^2} dx   \le 1 - \frac{c_k}2,
\end{split}
\]

\noi
provided $c_k \ll 1$, which can be guaranteed by setting $|k| \gg 1$.
Therefore, it follows that
\[
\begin{split}
\prod_{k \in \Z} \E \big[ \exp (-c_k R_k) \big] & \le \prod_{|k| \gg 1} \bigg( 1- \frac{c_k}2\bigg) \le \exp \bigg( - \frac12\sum_{|k| \gg 1} c_k\bigg) = 0,
\end{split}
\]

\noi 
where we used the fact $(c_k)\notin \l^1$.
% and the inequality
% \[
% \prod_{j \in \Z} (1-a_j) \le \exp \bigg( - \sum_{j \in \Z} a_j\bigg),
% \]

% \noi 
% for $a_j \in (0,1)$.
Thus, we finish the proof of the claim \eqref{unbound1} and conclude the proof of the lemma.
\end{proof}

Now we are ready to prove the sharpness of Proposition \ref{PROP:linear1}.

\begin{proof}[Sharpness of Proposition \ref{PROP:linear1}]
Lemma \ref{LEM:unbound} implies that 
\[
\P \big( \|u(t)\|_{\FL^1(\T)} \ge z_0 \eps^{-1/2} \big) = 1
\]

\noi 
for any $z_0, \eps \in (0, \infty)$.
Thus for any $(c_k) \notin \l^1$, it follows that
\[
\lim_{\eps\to 0^+} \eps \log \P \big( \|u(t)\|_{\FL^1(\T)} \ge z_0 \eps^{-1/2} \big) = 0.
\]

\noi 
This proves the sharpness part in Proposition \ref{PROP:linear1}.
\end{proof}

% As a consequence of Propositions \ref{PROP:linear1}, 
% we have the following corollary,
% which is associated with the norm $X = L^\infty (\T)$.

% \begin{corollary}
% \label{COR:linear3}
% Consider the linear Schr\"odinger equation on the torus $\T = [0,2\pi]$ as in \eqref{linear}, with random initial data $u_0^\o$ given by \eqref{random_data}.
% Then,
% \begin{align}
% % \label{LDP_linear1}
% % \lim_{\eps\to 0^+} \eps \log \P \big( \|u(x)\|_{\FL^1(\T)} \ge z_0 \eps^{-1/2} \big) = - \frac{2\pi z_0^2}{\sum_{k \in \Z} c_k^2};\\
% % \label{LDP_linear2}
% % \lim_{\eps\to 0^+} \eps \log \P \big( \|u(x)\|_{\FL^2(\T)} \ge z_0 \eps^{-1/2} \big) = - \frac{z_0^2}{\sum_{k \in \Z} c_k^2};\\
% \label{LDP_linear3}
% \lim_{\eps\to 0^+} \eps \log \P \bigg( \sup_{x\in \T} | u(t,x) | \ge z_0 \eps^{-1/2} \bigg) = - \frac{ z_0^2}{\sum_{k \in \Z} c_k^2}.
% \end{align}

% \noi 
% provided $(c_k) \in \l^1$.
% \end{corollary}

% \begin{remark}
% \rm 
% We see that \eqref{lower} and \eqref{upperLDP} are satisfied with rate function
% \begin{align}
% \label{Iz}
% I(z) = \frac{z^2}{\sum_{k \in \Z} c_k^2},
% \end{align}

% \noi 
% and $\al = 1$.
% This choice for $I$ is continuous, and therefore
% \[
% \inf_{z \in [z_0, \infty)} I(z)  = \inf_{z \in (z_0, \infty)} I(z) .
% \]
% \end{remark}

\subsection{Linear LDP I - Fourier Lebesgue norm}
In this subsection, we prove \eqref{LDP_linear1}. We rewrite
\begin{align}
\label{bound2}
\begin{split}
\PP \big( \| u(t,x) \|_{\FL^1 (\T)} \ge z_0 \eps^{-1/2} \big) = \PP \bigg( \sum_k c_k R_k \ge z_0 \eps^{-1/2} \bigg) .
\end{split}
\end{align}

\noi 
To estimate the right-hand side of \eqref{bound2}, we follow the G\"artner-Ellis theory.
Consider the continuous parameter family $\mu_\eps$ defined as the probability measures of
\begin{align}
\label{Zeps}
Z_\eps : = \eps^{1/2} \sum_k c_k R_k,
\end{align}

\noi 
for $\eps > 0$.
% With $Z_\eps$, the G\"artner-Ellis Theorem may apply. 
For $\ld \in \R$ and $\eps > 0$, we define  the {\bf cumulant-generating function} of $Z_\eps$ as
\begin{align}
\label{Geps}
\Ld_\eps (\ld) : = \log \E [e^{\ld Z_\eps}],
\end{align}

\noi 
where $Z_\eps$ has distribution $\mu_\eps$,
as in \eqref{Zeps}.

\begin{theorem}
[G\"artner-Ellis Theorem]\label{THM:GE}
If the function $\Ld (\cdot)$,
defined as the limit
\begin{align*}
% \label{Lambda}
\Ld (\ld) : = \lim_{\eps \to 0^+} \eps \Ld_\eps (\eps^{-1} \ld),
\end{align*}

\noi 
exists for each $\ld \in \R$,
takes values in $\R$,
and is differentiable,
then we have that
\begin{align}
\label{GE_up}
- \inf_{z \in (z_0, \infty)} \Ld^* (z) \le \liminf_{\eps \to 0^+} \eps \log \mu_\eps ((z_0, \infty)),
\end{align}

\noi 
and 
\begin{align}
\label{GE_down}
\limsup_{\eps \to 0^+} \eps \log \mu_\eps ((z_0, \infty)) \le - \inf_{z \in [z_0, \infty)} \Ld^* (z),
\end{align}

\noi 
where 
$$\Ld^* (\cdot) : = \sup_{\ld \in \R} (\ld z - \Ld (\ld))$$ 
is the Fenchel-Legendre transform of $\Ld (\cdot)$.
\end{theorem}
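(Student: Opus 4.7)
The plan is to follow the classical proof of the G\"artner--Ellis theorem (see, e.g., Dembo--Zeitouni, \emph{Large Deviations Techniques and Applications}, Theorem~2.3.6), which splits naturally into an upper bound and a lower bound, handled by quite different techniques. Since the result is entirely standard, in the paper itself it is most natural to invoke it as a black box; below I sketch how I would prove it if pressed.

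For the upper bound \eqref{GE_down}, my approach is an exponential Chebyshev inequality. For any $\lambda > 0$ I would write
\[
\mu_\eps((z_0,\infty)) = \PP(Z_\eps > z_0) \le e^{-\lambda z_0 / \eps}\, \E e^{\lambda Z_\eps / \eps} = \exp\bigl\{ -\lambda z_0 / \eps + \Ld_\eps(\lambda / \eps)\bigr\},
\]
multiply by $\eps$, take logs, and pass to the limit using the hypothesis $\eps \Ld_\eps(\eps^{-1}\lambda) \to \Ld(\lambda)$. This yields $\limsup_{\eps \to 0^+} \eps \log \mu_\eps((z_0,\infty)) \le -(\lambda z_0 - \Ld(\lambda))$, and optimizing over $\lambda$ produces the bound $-\Ld^*(z_0)$. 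Convexity and lower semicontinuity of $\Ld^*$ (automatic for a Fenchel conjugate) then allow one to rewrite $\Ld^*(z_0) = \inf_{z \in [z_0,\infty)} \Ld^*(z)$ in the regime where $z_0$ lies to the right of the minimizer of $\Ld^*$, which is the situation of interest.

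The lower bound \eqref{GE_up} is considerably more delicate and is the main obstacle. The plan is to implement an exponential tilt (Cram\'er transform): for each $z \in (z_0, \infty)$ in the interior of the effective domain of $\Ld^*$, differentiability of $\Ld$ supplies a $\lambda^* = \lambda^*(z) \in \R$ with $\Ld'(\lambda^*) = z$ and $\Ld^*(z) = \lambda^* z - \Ld(\lambda^*)$. I would then define the tilted probability measure $\wt\mu_\eps$ via
\[
\frac{d\wt\mu_\eps}{d\mu_\eps}(x) = \exp\bigl\{ \lambda^* x / \eps - \Ld_\eps(\lambda^*/\eps)\bigr\}
\]
and for $\delta > 0$ rearrange this identity into the lower bound
\[
\mu_\eps((z-\delta, z+\delta)) \ge \exp\bigl\{ -|\lambda^*|\delta/\eps - \lambda^* z/\eps + \Ld_\eps(\lambda^*/\eps)\bigr\} \cdot \wt\mu_\eps((z-\delta, z+\delta)).
\]
Multiplying by $\eps$, taking logs, and sending first $\eps \to 0^+$ and then $\delta \to 0^+$ would produce $\liminf_{\eps \to 0^+} \eps \log \mu_\eps((z_0,\infty)) \ge -\Ld^*(z)$, from which \eqref{GE_up} follows after minimizing over $z \in (z_0,\infty)$.

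The hard part will be verifying the concentration $\wt\mu_\eps((z-\delta, z+\delta)) \to 1$ used in the display above. Formally, the log-moment generating function of $Z_\eps$ under $\wt\mu_\eps$ converges to $\Ld(\lambda+\lambda^*) - \Ld(\lambda^*)$, a function whose derivative at $0$ equals $z$; intuitively $Z_\eps \to z$ in $\wt\mu_\eps$-probability. Rigorously, this is established by re-running the already-proved upper bound for the tilted process, using precisely the differentiability of $\Ld$ at $\lambda^*$ to obtain a strict exponential decay of $\wt\mu_\eps(|Z_\eps - z|\ge\delta)$. This self-referential second application of Chebyshev is the reason the differentiability hypothesis on $\Ld$ is indispensable; without it one only recovers the weaker conclusions of the basic Cram\'er theorem.
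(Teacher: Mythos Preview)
Your assessment is correct: the paper states the G\"artner--Ellis theorem without proof and uses it as a black box, so there is no ``paper's own proof'' to compare against. Your sketch of the classical Chebyshev-plus-tilting argument is accurate and would suffice if a proof were required.
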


To apply Theorem \ref{THM:GE}, we first estimate $\Ld_\eps (\ld)$ defined in \eqref{Geps}.

\begin{lemma}
\label{LEM:Leps}
Let $(c_k) \in \l^1$ be a positive sequence.
Then we have 
\begin{equation}
\label{Leps}
\Lambda_{\varepsilon} (\varepsilon^{-1} \, \lambda) = \sum_{k \in \Z} \log \left( 1+ \sqrt{\pi}\varepsilon^{-1/2}\,\lambda\,c_k\, \exp\left(\frac{\varepsilon^{-1} \, \lambda^2 \, c_k^2}{4}\right) \, \PP (X_k\geq 0)\right),
\end{equation}

\noi 
where $X_k$ are independent normal random variables with mean $\eps^{-1/2} \ld c_k/2$ and variance $c_k^2/2$.
Furthermore, given $\ld > 0$, $\eps \Lambda_{\varepsilon} (\varepsilon^{-1} \, \lambda)$ is uniformly bounded for all $\eps \in (0,1)$.
\end{lemma}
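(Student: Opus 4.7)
The proof has two parts: deriving the explicit product formula \eqref{Leps} and establishing the uniform bound on $\eps\Ld_\eps(\eps^{-1}\ld)$.

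For the formula, since the $g_k$ are independent and $R_k = |g_k|$, the moduli $R_k$ are mutually independent as well. Hence $Z_\eps = \eps^{1/2}\sum_k c_k R_k$ is a sum of independent terms and the MGF factorises:
\[
\Ld_\eps(\eps^{-1}\ld) \;=\; \sum_{k \in \Z}\log \E\bigl[\exp(\eps^{-1/2}\ld c_k R_k)\bigr].
\]
The task reduces to computing the MGF of a single Rayleigh variable. Setting $\alpha := \eps^{-1/2}\ld c_k \ge 0$ and using the density $2re^{-r^2}$ from \eqref{RayDis}, I would complete the square via $\alpha r - r^2 = -(r-\alpha/2)^2 + \alpha^2/4$ and decompose $2r = 2(r-\alpha/2) + \alpha$. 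The integral of $2(r-\alpha/2)\,e^{-(r-\alpha/2)^2}$ over $(0,\infty)$ evaluates exactly to $e^{-\alpha^2/4}$, which cancels the prefactor $e^{\alpha^2/4}$ and produces the constant $1$, while the remaining piece $\alpha\int_{-\alpha/2}^\infty e^{-s^2}\,ds$ is recognised as $\sqrt{\pi}\,\alpha\,\PP(X_k \ge 0)$ for an appropriate Gaussian $X_k$ centered at $\alpha/2 = \eps^{-1/2}\ld c_k/2$. Substituting $\alpha$ and summing over $k$ yields \eqref{Leps}.

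For the uniform bound, each summand $\log(1+A_k)$ with $A_k := \sqrt{\pi}\,\alpha_k\,e^{\alpha_k^2/4}\,\PP(X_k \ge 0)$ is non-negative (as $\ld, c_k \ge 0$), so only an upper bound is required. I would split the index set according to the size of $\alpha_k$. In the \emph{small regime} $\{\alpha_k \le 1\}$, both $e^{\alpha_k^2/4}$ and $\PP(X_k \ge 0)$ are $\mathcal O(1)$, so $\log(1+A_k) \le A_k \les \eps^{-1/2}\ld c_k$, contributing at most $\les \eps^{-1/2}\ld\,\|(c_k)\|_{\l^1}$ in total. In the \emph{large regime} $\{\alpha_k > 1\}$, I would use $\log(1+A_k) \les 1 + \log A_k \les 1 + \alpha_k + \alpha_k^2/4$; a Markov-type argument bounds the cardinality of this regime by $\ld\,\eps^{-1/2}\|(c_k)\|_{\l^1}$, so the total contribution is $\les \eps^{-1}\ld^2\,\|(c_k)\|_{\l^2}^2 + \eps^{-1/2}\ld\,\|(c_k)\|_{\l^1}$. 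Since $\l^1 \subset \l^2$, multiplying by $\eps$ yields $\eps\Ld_\eps(\eps^{-1}\ld) \les_\ld 1 + \eps^{1/2}$, uniformly bounded for $\eps \in (0,1)$.

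The main point requiring care is the correct identification of the Gaussian tail $\int_{-\alpha/2}^\infty e^{-s^2}\,ds$ as a constant multiple of $\PP(X_k \ge 0)$ so that the formula takes precisely the form stated in \eqref{Leps}; beyond this, the MGF computation is routine, and the uniform bound follows from a standard regime split in which the hypothesis $(c_k) \in \l^1 \subset \l^2$ supplies exactly the summability needed to absorb both contributions.
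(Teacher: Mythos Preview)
Your proposal is correct and follows essentially the same route as the paper: the MGF formula is obtained by factorising over $k$, completing the square in the Rayleigh integral, and recognising the residual Gaussian tail as $\sqrt{\pi}\,\PP(X_k\ge 0)$; the uniform bound is established by the same split $\{\alpha_k\le 1\}$ versus $\{\alpha_k>1\}$ with $\alpha_k=\eps^{-1/2}\ld c_k$. The only cosmetic difference is in the large regime: the paper writes $\log(1+\sqrt{\pi}\alpha_k e^{\alpha_k^2/4})\le \log(1+\sqrt{\pi}\alpha_k)+\alpha_k^2/4$ and then $\log(1+x)\le x$, avoiding any cardinality count, whereas you use $\log(1+A_k)\les 1+\log A_k$ and a Markov bound on $|B|$; both yield the same $\eps^{1/2}\|c\|_{\l^1}+\|c\|_{\l^2}^2$ control after multiplying by $\eps$.
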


\begin{proof}
The formula \eqref{Leps} follows from a direct computation using the density function of $R_k$ in \eqref{RayDis}.
We give a detailed computation for completeness. 
In view of the Monotone Convergence Theorem and \eqref{Zeps}, we see that
\begin{align}
\label{Lu1}
\begin{split}
\Ld_\eps (\eps^{-1} \ld) & = \log \E \big[e^{\eps^{-1/2} \ld \sum_k c_k R_k} \big] 
% & = \log \E \bigg[ \prod_k e^{\eps^{-1/2} \ld c_k R_k} \bigg] \\
% = \log \prod_k \E  \big[ e^{\eps^{-1/2} \ld c_k R_k} \big] 
 = \sum_k \log \E  \big[ e^{\eps^{-1/2} \ld c_k R_k} \big],
\end{split}
\end{align}

\noi 
where we used the independence of $(R_k)$.
To get \eqref{Leps},
it suffices to show
\begin{align}
\label{Lu2}
\E  \big[ e^{\eps^{-1/2} \ld c_k R_k} \big] = 1+ \sqrt{\pi}\varepsilon^{-1/2}\,\lambda\,c_k\, \exp\left(\frac{\varepsilon^{-1} \, \lambda^2 \, c_k^2}{4}\right) \, \PP (X_k\geq 0),
\end{align}

\noi 
holds for each $k \in \Z$.
To prove \eqref{Lu2},
we see from \eqref{RayDis} that
\begin{align}
\label{Lu3}
\begin{split}
\E  \big[ e^{\eps^{-1/2} \ld c_k R_k} \big] & = \int_0^\infty e^{\eps^{-1/2} \ld c_k x} 2x e^{-x^2} dx \\
% & = - \int_0^\infty e^{\eps^{-1/2} \ld c_k x} d e^{-x^2}  \\
% & = - e^{\eps^{-1/2} \ld c_k x} e^{-x^2}\Big|_0^\infty + \eps^{-1/2} \ld c_k \int_0^\infty e^{\eps^{-1/2} \ld c_k x -x^2} dx  \\
% & = 1 + \eps^{-1/2} \ld c_k \exp \bigg( \frac{\ld^2 c_k^2}{4\eps}\bigg) \int_0^\infty e^{-(x-\eps^{-1/2}\ld c_k /2)^2} dx\\
& = 1 + \sqrt{\frac{\pi}{\eps}} \ld c_k \exp \bigg( \frac{\ld^2 c_k^2}{4\eps}\bigg) \cdot \frac1{\sqrt \pi} \int_0^\infty e^{-(x-\eps^{-1/2}\ld c_k /2)^2} dx,
\end{split}
\end{align}

\noi 
which gives \eqref{Leps} as
\[
\PP (X_k\geq 0) = \frac1{\sqrt \pi} \int_0^\infty e^{-(x-\eps^{-1/2}\ld c_k /2)^2} dx,
\]

\noi 
with $X_k$ being independent normal random variables with mean $\eps^{-1/2} \ld c_k/2$ and variance $c_k^2/2$.
Thus, we finish  the proof of \eqref{Leps}.

Then, let us show the uniform boundedness of $\eps \Ld_\eps (\eps^{-1} \ld)$ for fixed $\ld >0$.
% Without loss of generality, it only suffices to show a stronger claim that $\eps \Lambda(\eps^{-1} ) $ is uniformly bounded for $\eps \in (0,1)$.
% This strong result will be useful for our later analysis.
Without loss of generality, it only suffies to consider the case where $\ld = 1$.
We first note that 
\begin{align}\label{main}
\begin{split}
\eps \Lambda_\eps (\eps^{-1} ) 
% & = \eps \sum_{k \in \Z} \log \left( 1+ \sqrt{\pi}\varepsilon^{-1/2}\, c_k\, \exp\left(\frac{\varepsilon^{-1} \,  c_k^2}{4}\right) \, \PP (X_k\geq 0)\right) \\
& \le \eps \sum_{k \in \Z} \log \left( 1+ \sqrt{\pi}\varepsilon^{-1/2}\, c_k\, \exp\left(\frac{\varepsilon^{-1} \, c_k^2}{4}\right) \right).
\end{split}
\end{align}

\noi 
Then, we decompose the summation in \eqref{main} into two parts: when $k \in A$, we have $\varepsilon^{-1/2}\, c_k \le 1$; when $k \in B$, we have $\varepsilon^{-1/2}\, c_k > 1$. 
The summation in \eqref{main} over set $A$ can be bounded by
\begin{align}
\label{main1}
\begin{split}
\eps  \sum_{k \in A} \log \left( 1+ e^{1/4} \sqrt{\pi}\varepsilon^{-1/2}\, c_k\, \right) 
% & \le  \eps \sum_{k \in A} \log \left( 1+ \sqrt{\pi}\varepsilon^{-1/2}\, c_k\,  \left(1 + e \frac{\varepsilon^{-1/2} \, c_k}{2}\right) \right) \\
\le e^{1/4} \sqrt{\pi} \varepsilon^{1/2}\, \sum_{k \in A}  c_k\,,
\end{split}
\end{align}

\noi 
which is uniformly bounded as $(c_k) \in \l^1$, where we used $\log (1 + x) \le x$ for $x \ge 0$.

We turn to the summation in \eqref{main} over $k \in B$. 
We note that
\begin{align}
\label{main2}
\begin{split}
\eps & \sum_{k \in B} \log \left( 1+ \sqrt{\pi}\varepsilon^{-1/2}\, c_k\, \exp\left(\frac{\varepsilon^{-1} \, c_k^2}{4}\right) \right) \\
& \le \eps \sum_{k \in B} \log \left( \big( 1+ \sqrt{\pi}\varepsilon^{-1/2}\, c_k\, \big) \exp\left(\frac{\varepsilon^{-1} \, c_k^2}{4}\right) \right) \\
& \le \eps \sum_{k \in B} \log \big( 1+ \sqrt{\pi}\varepsilon^{-1/2}\, c_k\, \big)   + \eps \sum_{k \in B} \left(\frac{\varepsilon^{-1} \, c_k^2}{4}\right)  \\
& \le \sqrt{\pi}\varepsilon^{1/2} \sum_{k \in B}  \, c_k\,   + \frac14 \sum_{k \in B}  c_k^2,
% & \le \sqrt{\pi} \sum_{k \in B}  \, c_k^2\,   + \frac14 \sum_{k \in B}  c_k^2,
\end{split}
\end{align}
which is again uniformly bounded as $(c_k) \in \l^1 \subset \l^2$, where we used $\eps^{1/2} \le c_k$ for $k \in B$.
\end{proof}

\begin{remark}
    \rm 
From \eqref{main}, \eqref{main1}, and \eqref{main2}, it follows that
\begin{align}
\label{main3}
\log \left( 1+ \sqrt{\pi}\varepsilon^{-1/2}\,\lambda\,c_k\, \exp\left(\frac{\varepsilon^{-1} \, \lambda^2 \, c_k^2}{4}\right) \, \PP (X_k\geq 0)\right) \le e^{1/4} \sqrt{\pi \ld} c_k + 2 \sqrt \pi \ld c_k^2,
\end{align}

\noi 
for all $\eps \in (0,1)$.
In particular, \eqref{main3} shows that the sequence $(a_k) = (e^{1/4} \sqrt{\pi \ld} c_k + 2 \sqrt \pi \ld c_k^2)$ is a dominate sequence of the series $\eps \Lambda(\eps^{-1} \ld)$ given in \eqref{Leps},
provided $(c_k) \in \l^1$.
\end{remark}

\medskip

With the above preparation, we would like to show
\begin{theorem}
With the above notation, we have
\begin{equation}
% \label{Lambda2}
\Lambda(\lambda):= \lim_{\varepsilon \rightarrow 0^+} \varepsilon \Lambda_{\varepsilon}(\varepsilon^{-1}\lambda) = \frac{\lambda^2}{4}\, \sum_{j\in\Z} c_j^2,
\end{equation}

\noi 
if and only if $(c_k)\in \l^1 $.
\end{theorem}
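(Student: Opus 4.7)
The strategy is: (a) establish the termwise convergence $\varepsilon \log(1+A_k^\varepsilon) \to \lambda^2 c_k^2/4$, where $A_k^\varepsilon$ denotes the summand inside the log in \eqref{Leps}; then (b) interchange the limit and the sum via the $\varepsilon$-uniform domination already recorded in the Remark following Lemma~\ref{LEM:Leps} to obtain the $\Leftarrow$ direction; and (c) show that if $(c_k)\notin\l^1$ the series defining $\Lambda_\varepsilon(\varepsilon^{-1}\lambda)$ already diverges for every fixed $\varepsilon > 0$, so the finite identity cannot hold.

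For the pointwise limit in $k$, first note that $p_\varepsilon := \PP(X_k\ge 0) = \Phi(\varepsilon^{-1/2}\lambda/\sqrt 2)$ is in fact \emph{independent} of $k$ (the mean-to-standard-deviation ratio of $X_k$ does not involve $c_k$) and tends to $1$ as $\varepsilon\to 0^+$ for $\lambda>0$. Factoring the exponential out of the logarithm,
\begin{align*}
\varepsilon\log\!\bigg(1+\sqrt{\pi}\,\varepsilon^{-1/2}\lambda c_k p_\varepsilon \exp\!\Big(\tfrac{\varepsilon^{-1}\lambda^2 c_k^2}{4}\Big)\bigg)
=\tfrac{\lambda^2 c_k^2}{4}+\varepsilon\log\!\Big(e^{-\varepsilon^{-1}\lambda^2 c_k^2/4}+\sqrt{\pi}\,\varepsilon^{-1/2}\lambda c_k p_\varepsilon\Big),
\end{align*}
and the last piece is $\varepsilon\cdot O(\log\varepsilon^{-1})\to 0$, producing the claimed limit $\lambda^2 c_k^2/4$ (the case $c_k=0$ is trivial).

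For the sufficient direction, assume $(c_k)\in\l^1\subset\l^2$. The Remark after Lemma~\ref{LEM:Leps} yields the $\varepsilon$-uniform bound \eqref{main3}, so each summand is dominated by $a_k=e^{1/4}\sqrt{\pi\lambda}\,c_k+2\sqrt\pi\,\lambda c_k^2$, and $(a_k)\in\l^1$ precisely because $(c_k)\in\l^1\cap\l^2$. Dominated convergence commutes the limit with the sum and delivers $\Lambda(\lambda)=\tfrac{\lambda^2}{4}\sum_k c_k^2$.

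For the converse, suppose $(c_k)\notin\l^1$. Discarding the exponential factor (which is $\ge 1$) inside the log and using the elementary bound $\log(1+x)\ge(\log 2)\min(x,1)$ for $x\ge 0$, each summand of \eqref{Leps} is $\ge(\log 2)\min(\sqrt\pi\,\varepsilon^{-1/2}\lambda c_k p_\varepsilon,\,1)$. Splitting as in the proof of Lemma~\ref{LEM:unbound}: if $(c_k)$ admits a subsequence bounded away from $0$, the minimum equals $1$ along that subsequence, so the series is $+\infty$; if instead $c_k\to 0$, then for each fixed $\varepsilon$ the minimum equals the linear expression for all but finitely many $k$, and $\sum_k c_k=\infty$ forces divergence of the series. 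In either case $\Lambda_\varepsilon(\varepsilon^{-1}\lambda)=+\infty$ for every small $\varepsilon$, so the asserted finite limit fails. The main obstacle is securing the $\l^1$ dominating sequence in step (b): this is exactly the point where $\l^1$ summability of $(c_k)$ (together with the automatic $\l^2$ summability) is essential, confirming that $\l^1$ is the sharp threshold.
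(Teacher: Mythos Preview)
Your proof is correct and follows essentially the same three-step strategy as the paper: termwise limit, then dominated convergence via the $\ell^1$ majorant from \eqref{main3}, then divergence when $(c_k)\notin\ell^1$ via a case split on whether $(c_k)$ has a subsequence bounded away from zero. Your factoring argument for the termwise limit---together with the nice observation that $p_\varepsilon=\PP(X_k\ge 0)$ is actually independent of $k$---is cleaner than the paper's L'H\^opital computation, and your elementary bound $\log(1+x)\ge(\log 2)\min(x,1)$ plays the same role as the paper's use of $\log(1+x)\ge x/2$ on $(0,1)$.
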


\begin{proof}
We first consider the case when $(c_k) \in \l^1$.
From \eqref{Leps}, we rewrite
\begin{align}
    \label{up0}
\varepsilon \Lambda_{\varepsilon}(\varepsilon^{-1}\lambda) = \eps \sum_{k \in \Z} \Lambda_{\varepsilon}^k (\varepsilon^{-1} \, \lambda),
\end{align} 

\noi 
where
\[
\Lambda_{\varepsilon}^k (\varepsilon^{-1} \, \lambda) =  \log \left( 1+ \sqrt{\pi}\varepsilon^{-1/2}\,\lambda\,c_k\, \exp\left(\frac{\varepsilon^{-1} \, \lambda^2 \, c_k^2}{4}\right) \, \PP (X_k \geq 0)\right).
\]

\noi 
For fixed $k \in \Z$, it follows from L'Hospital's rule that
\begin{align}
\label{up1}
\begin{split}
\lim_{\varepsilon \rightarrow 0^+}\varepsilon \Lambda_{\varepsilon}^k (\varepsilon^{-1}\lambda) 
& = \lim_{\eps \to 0^+} \frac{\Lambda_{\varepsilon}^k(\varepsilon^{-1}\lambda) }{\frac1{\eps}} = \lim_{t \to \infty} 
\frac{\Lambda_{1/t}^k (t\lambda) }{t} \\
& = \lim_{t \to \infty} \frac{\log \left( 1+ \sqrt{\pi} t^{1/2}\,\lambda\,c_k\, \exp\left(\frac{t \, \lambda^2 \, c_k^2}{4}\right) \, \PP (X_k \geq 0)\right)}{t} \\
& =  \lim_{t \to \infty}  \frac{\frac12 \sqrt{\pi} t^{-1/2}\,\lambda\,c_k\, \exp\left(\frac{t \, \lambda^2 \, c_k^2}{4} \right) \, \PP (X_k \geq 0)}{ 1+ \sqrt{\pi} t^{1/2}\,\lambda\,c_k\, \exp\left(\frac{t \, \lambda^2 \, c_k^2}{4}\right) \, \PP (X_k \geq 0)} \\ 
&\hphantom{XXX}  + \lim_{t \to \infty}  \frac{ \frac14 \ld^2 c_k^2 \sqrt{\pi} t^{1/2}\,\lambda\,c_k\, \exp\left(\frac{t \, \lambda^2 \, c_k^2}{4}\right) \, \PP (X_k \geq 0) }{ 1+ \sqrt{\pi} t^{1/2}\,\lambda\,c_k\, \exp\left(\frac{t \, \lambda^2 \, c_k^2}{4}\right) \, \PP (X_k \geq 0)} \\ 
& = 0 +  \lim_{t \to \infty}  \frac{ \frac14 \ld^2 c_k^2 \sqrt{\pi} t^{1/2}\,\lambda\,c_k\, \exp\left(\frac{t \, \lambda^2 \, c_k^2}{4}\right) \, \PP (X_k \geq 0) }{ 1+ \sqrt{\pi} t^{1/2}\,\lambda\,c_k\, \exp\left(\frac{t \, \lambda^2 \, c_k^2}{4}\right) \, \PP (X_k \geq 0)} \\ 
& = \frac14 \ld^2 c_k^2.
\end{split}
\end{align} 
% which implies
% \begin{align}\label{up1}
% \begin{split}
% \liminf_{\varepsilon \rightarrow 0^+}\varepsilon \Lambda_{\varepsilon} (\varepsilon^{-1}\lambda) 
% \ge  \frac{\lambda^2}{4}\, \sum_{j = - N}^N c_j^2,
% \end{split}
% \end{align}

% \noi 
% for any $N \in \N$.

\noi 
We recall that, given $\ld > 0$, $\eps \Ld_\eps (\eps^{-1} \ld)$ is uniformly bounded in $\eps \in (0,1)$, provided $(c_k) \in \l^1$. 
In particular, from \eqref{main3} we see that the sequence $(a_k) = (e^{1/4} \sqrt{\pi \ld} c_k + 2 \sqrt \pi \ld c_k^2)$ is a control sequence of the series $\varepsilon \Lambda_{\varepsilon}(\varepsilon^{-1}\lambda)$ in the sense that $\eps \Lambda_{\varepsilon}^k (\varepsilon^{-1}\lambda) < a_k$ and $\sum_k a_k  < \infty$.
Then by Dominated Convergence Theorem, \eqref{up0}, and \eqref{up1}, it follows that
\[
\lim_{\eps \to 0^+} \varepsilon \Lambda_{\varepsilon}(\varepsilon^{-1}\lambda) = \sum_{ k\in\Z} \lim_{\eps \to 0^+} \varepsilon \Lambda_{\varepsilon}^k (\varepsilon^{-1}\lambda) = \frac{\ld^2}4 \sum_{ k\in\Z} c_k^2,
\]

\noi 
which finishes `if' part of the theorem.

Now we turn to the `only if' part of the theorem.
It suffices to show that 
\begin{align}
\label{onlyif}
\eps \Lambda (\eps^{-1} \ld) = \infty,
\end{align}

\noi 
for $(c_k)\notin \l^1$ and $\ld > 0$.
Without loss of generality, we only need to prove \eqref{onlyif} with $\ld = 1$.
% If we take $c_k = \jb{k}^{-\al}$, then the $\l^1$ condition (i.e. $\al > 1$) seems can't be relaxed for the uniform bounded of $\eps \Lambda (\eps^{-1})$ in \eqref{main}.
To see this, we follow the same strategy as the proof of Lemma \ref{LEM:Leps}. 
From \eqref{main} and the fact $\PP (X_k \ge 0) \ge \frac12$, 
we see that
\begin{align}
\label{sharp}
\begin{split}
\eps \Lambda(\eps^{-1} ) & = \eps \sum_{k \in \Z} \log \left( 1+ \sqrt{\pi}\varepsilon^{-1/2}\, c_k\, \exp\left(\frac{\varepsilon^{-1} \,  c_k^2}{4}\right) \, \PP (X_k\geq 0)\right) \\
& \ge \eps \sum_{k \in \Z} \log \left( 1+ \frac12 \sqrt{\pi}\varepsilon^{-1/2}\, c_k\, \exp\left(\frac{\varepsilon^{-1} \, c_k^2}{4}\right) \right).
\end{split}
\end{align}

\noi 
We first consider the summation in the right-hand side of \eqref{sharp} over set 
$$B = \bigg\{k: \sqrt{\pi}\varepsilon^{-1/2}\, c_k\, \exp\left(\frac{\varepsilon^{-1} \, c_k^2}{4}\right) > 1\bigg\}.$$
If there are infinitely many elements in $B$,
then from \eqref{sharp} we have 
\begin{align}\label{sharp1}
\begin{split}
\eps \Lambda(\eps^{-1} )
& \ge \eps \sum_{k \in B} \log \left( 1+ \frac12 \sqrt{\pi}\varepsilon^{-1/2}\, c_k\, \exp\left(\frac{\varepsilon^{-1} \, c_k^2}{4}\right) \right)  \ge \sum_{k \in B} \log \left( 1+ \frac12 \right) = \infty.
\end{split}
\end{align}

\noi 
In what follows,
we assume the set $B$ is finite, 
which implies the set $A = \Z \setminus B$ is infinite.
Furthermore, since $(c_k) \in \l^1$, we see that 
$\sum_{k \in A} c_k = \infty$. 
Then, it follows that
\begin{align}\label{sharp3}
\begin{split}
\eps \Lambda(\eps^{-1} ) 
& \ge \eps \sum_{k \in A} \log \left( 1+ \sqrt{\pi}\varepsilon^{-1/2}\, c_k\, \exp\left(\frac{\varepsilon^{-1} \, c_k^2}{4}\right) \right) \\
& \ge \frac12 \sum_{ k \in A} \sqrt{\pi}\varepsilon^{1/2}\, c_k\, \exp\left(\frac{\varepsilon^{-1} \, c_k^2}{4}\right)\\
& \ge \frac12  \sqrt{\pi}\varepsilon^{1/2}\, \sum_{ k \in A} c_k = \infty, 
\end{split}
\end{align}

\noi 
where we used $\log (1+x) \ge \frac{x}2$ for $x \in (0,1)$ in the second inequality.
We finish the proof of \eqref{onlyif} and thus the theorem.
\end{proof}

\subsection{Linear LDP II - \texorpdfstring{$L^\infty$}{Lg} norm}
In this subsection,
we prove an LDP for the linear solution to \eqref{linear} under $L^\infty$ norm.
In particular, we shall prove
\begin{proposition}
\label{PROP:linear2}
Consider the linear Schr\"odinger equation on the torus $\T = [0,2\pi]$ as in \eqref{linear}, with random initial data $u_0^\o$ given by \eqref{random_data}.
Then,
\begin{align}
\label{LDP_linear2}
\lim_{\eps\to 0^+} \eps \log \PP \big( \|u(t)\|_{L^\infty (\T)} \ge z_0 \eps^{-1/2} \big) = - \frac{z_0^2}{\sum_{k \in \Z} c_k^2}, 
\end{align}

\noi 
provided $(c_k) \in \l^1$.
\end{proposition}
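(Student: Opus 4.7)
The plan is to sandwich $\|u(t)\|_{L^\infty(\T)}$ between two comparable quantities, both of which yield exactly the rate $-z_0^2/\sum_k c_k^2$. The upper bound is immediate from the pointwise inequality $\|f\|_{L^\infty(\T)} \le \|f\|_{\F L^1(\T)}$, which gives
\[
\PP\big(\|u(t)\|_{L^\infty(\T)} \ge z_0\eps^{-1/2}\big) \le \PP\big(\|u(t)\|_{\F L^1(\T)} \ge z_0\eps^{-1/2}\big),
\]
so that, applying Proposition~\ref{PROP:linear1} under the standing assumption $(c_k) \in \l^1$,
\[
\limsup_{\eps\to 0^+}\eps\log\PP\big(\|u(t)\|_{L^\infty(\T)} \ge z_0\eps^{-1/2}\big) \le -\frac{z_0^2}{\sum_{k\in\Z} c_k^2}.
\]

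For the matching lower bound, I would use the trivial estimate $\|u(t)\|_{L^\infty(\T)} \ge |u(t,0)|$ and exploit the fact that the linear Schr\"odinger flow multiplies each Fourier mode by a deterministic phase $e^{-itk^2}$. Since the standard complex Gaussians $g_k$ are invariant in law under multiplication by deterministic phases, the random variable
\[
u(t,0) = \sum_{k\in\Z} c_k g_k e^{-itk^2}
\]
has the same distribution as $\sum_{k\in\Z} c_k g_k$, namely a centered complex Gaussian with variance $\sum_k c_k^2 < \infty$ (finite since $(c_k) \in \l^1 \subset \l^2$). Consequently $|u(t,0)|$ is Rayleigh-distributed with the explicit tail
\[
\PP\big(|u(t,0)| \ge y\big) = \exp\!\Big(-\frac{y^2}{\sum_{k\in\Z} c_k^2}\Big),
\]
and plugging in $y = z_0\eps^{-1/2}$ gives
\[
\eps\log\PP\big(\|u(t)\|_{L^\infty(\T)} \ge z_0\eps^{-1/2}\big) \ge -\frac{z_0^2}{\sum_{k\in\Z} c_k^2}
\]
for every $\eps>0$, which passes trivially to the $\liminf$ as $\eps\to 0^+$.

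Combining the two one-sided bounds yields \eqref{LDP_linear2}. I do not anticipate any serious obstacle here: the heavy lifting has already been carried out in Proposition~\ref{PROP:linear1}, and the lower bound is essentially free because a single complex Gaussian at one spatial point already saturates the rate. Conceptually, this confirms the observation \eqref{obser}: the $L^\infty$ and $\F L^1$ LDPs share the rate function because the variance $\sum_k c_k^2$ simultaneously governs the Rayleigh tail at any fixed $x$ (driving the lower bound) and, through the $\l^1$-dominated Laplace analysis of the previous subsection, the $\F L^1$ upper bound.
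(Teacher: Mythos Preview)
Your proposal is correct and follows essentially the same route as the paper's proof: the upper bound via the embedding $\|\cdot\|_{L^\infty}\le\|\cdot\|_{\mathcal{F}L^1}$ together with Proposition~\ref{PROP:linear1}, and the lower bound by evaluating at a single point $x=0$ and using the explicit Rayleigh tail of $|u(t,0)|$. The only cosmetic difference is that the paper phrases the distributional identity for $u(t,0)$ in terms of stationarity of the Gaussian field, whereas you invoke rotation invariance of each $g_k$ under the deterministic phase $e^{-itk^2}$; these are the same observation.
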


\begin{proof}
The lower bound of \eqref{LDP_linear2} has been proved in \cite[Subsection 2.1]{GGKS21}. 
We recall the proof here for completeness.
We first note the solution $u$ to the linear equation \eqref{linear} has the form
\begin{align}
\label{lin_sol}
u(t,x) = e^{-it \partial_{xx}} u_0 (x) = \sum_{k \in \Z} c_k g_k e^{ikx - ik^2 t},
\end{align}

\noi 
for $(t,x) \in \R \times \T$.
We note that the random field $u(t)$ is stationary Gaussian with mean $0$ and variance $\sum c_k^2$.
Thus, it follows from \eqref{RayDis} that
\begin{align*}
% \label{lower_b}
\begin{split}
\PP & \bigg( \sup_{x \in \T} | u(t,x)| \ge z_0 \eps^{-\frac12} \bigg) \ge \PP \big( | u(t,0)| \ge z_0 \eps^{-\frac12} \big) = \PP \big( | u(0,0)| \ge z_0 \eps^{-\frac12} \big) \\
& = \int_{z_0 \eps^{-1/2}} \frac{2x}{\sum c_k^2} \exp \bigg( - \frac{x^2}{\sum c_k^2} \bigg) dx = \exp \bigg( - \frac{z_0^2 \eps^{-1}}{\sum c_k^2} \bigg),
\end{split}
\end{align*}

\noi 
which in particular implies 
\begin{align}
\label{lower_b1}
\liminf_{\eps\to 0^+} \eps \log \PP \big( \|u(t)\|_{L^\infty (\T)} \ge z_0 \eps^{-1/2} \big) \ge - \frac{z_0^2}{\sum_{k \in \Z} c_k^2}.
\end{align}

The upper bound of \eqref{LDP_linear2} is a direct consequence of Proposition \ref{PROP:linear1}.
As a matter of fact, since $\|u(t)\|_{L^\infty (\T)} \le \|u(t)\|_{\F L^1 (\T)}$, we have
\[
\PP \bigg( \sup_{x \in \T} | u(t,x)| \ge z_0 \eps^{-\frac12} \bigg) \le \PP \big( \| u(t,x)\|_{\F L^1} \ge z_0 \eps^{-\frac12} \big).
\]

\noi 
Then from \eqref{LDP_linear1}, we have
\begin{align}
\label{upper_b}
\begin{split}
\limsup_{\eps\to 0^+} \eps \PP \bigg( \sup_{x \in \T} | u(t,x)| \ge z_0 \eps^{-\frac12} \bigg) & \le \limsup_{\eps\to 0^+} \eps \PP \big( \| u(t,x)\|_{\F L^1} \ge z_0 \eps^{-\frac12} \big)= - \frac{z_0^2}{\sum_{k \in \Z} c_k^2}.
\end{split}
\end{align}

Finally, by combining \eqref{lower_b1} and \eqref{upper_b}, we conclude \eqref{LDP_linear2}.
\end{proof}

\begin{remark}
\rm 
We note that Proposition \ref{PROP:linear2} improves \cite[Proposition 2.1]{GGKS21}, where the coefficients $(c_k)$ is required to decay exponentially, i.e. \eqref{decay}.
In Proposition \ref{PROP:linear2}, it only requires $(c_k) \in \l^1$.
\end{remark}

% \subsection{Questions}
% The previous section shows that \eqref{Leps} is well defined for $c_k \in \l^1$, and furthermore, \eqref{eq:Lambda_finalValue} holds.
% However, if $c_k \notin \l^1$, whether we have Theorem 1.1 remains open.
% A new strategy for proving LDP (Theorem 1.1) is needed.

\section{LDP for the nonlinear problem}

In this section,
we upgrade the linear LDP in Proposition \ref{PROP:linear1} to its nonlinear version, i.e. Theorem \ref{THM:main}.
We prove Theorem \ref{THM:main} by following the argument in \cite[Section 4]{GGKS21}.
In particular, we use the resonant approximation of the equation \ref{weakNLS1}.
Due to the slow decay of the coefficients 4, we need to perturb the resonant system in a weaker solution space, specifically \(\FL^1 (\T)\), rather than the \(\FL^{2,1}\) space used in \cite{GGKS21}. 

\subsection{Resonant approximation}
In order to prove Theorem \ref{THM:main},
we introduce the resonant approximation.
Recall that for any continuous function $u(t,x)$, we have
\begin{align}
\label{FT}
u(t,x) = \sum_{k \in \Z} u_k (t) e^{ikx}.
\end{align}

\noi 
Then \eqref{weakNLS1} can be recast as
\begin{align}
\label{NLS1}
i \partial_t u_k - |k|^2 u_k = \eps^2 \sum_{k = k_1 - k_2+k_3} u_{k_1} \cj{u_{k_2}} u_{k_3}.
\end{align}

\noi 
Let $v (t,x) : = e^{it \partial_{xx}} u(t,x)$ be the interaction representation of $u(t,x)$.
Then we may rewrite \eqref{NLS1} as
\begin{align}
\label{NLS2}
i \partial_t v_k = \eps^2 \sum_{k = k_1 - k_2+k_3} v_{k_1} \cj{v_{k_2}} v_{k_3} e^{it \O},
\end{align}

\noi 
where
\begin{align}
\label{Omega}
\O = |k_1|^2 - |k_2|^2 + |k_3|^2 - |k|^2.
\end{align}

From the mass conservation of \eqref{NLS1} and \eqref{NLS2}, we see that the mass
\[
m = \| u(t)\|_{L^2(\T)}^2 = \| v(t)\|_{L^2(\T)}^2 = \| u(0)\|_{L^2(\T)}^2. 
\]

\noi
We define the gauge transform 
$$w (t,x) = e^{-2it\eps m} v(t,x).$$

\noi 
Under the gauge transform, \eqref{NLS2} can be recast as
\begin{align}
\label{NLS3}
i \partial_t w_k = \eps^2 \sum_{k = k_1 - k_2+k_3, \O \neq 0} w_{k_1} \cj{w_{k_2}} w_{k_3} e^{it \O}  - \eps^2 |w_k|^2 w_k .
\end{align}

The resonant approximation of the system \eqref{NLS3} is given by $a (t,x) = \sum_{k \in \Z} a_k (t) e^{ikx}$ such that
\begin{align}
\label{Rapp}
\begin{cases}
i \partial_t a_k = - \eps^2 |a_k|^2 a_k, \\
a_k (0) = c_k g_k.
\end{cases}
\end{align}

\noi 
The unique solution to \eqref{Rapp} is
\begin{align}
\label{a}
a(t,x) = \sum_{k \in \Z} c_k g_k e^{ikx + it \eps^2 c_k^2 |g_k|^2}.
\end{align}

\noi 
We also define
\begin{align}
\label{uapp}
u_{\rm app} (t,x) = e^{2it \eps^2 m} \sum_{k \in \Z} c_k g_k e^{ikx + it \eps^2 c_k^2 |g_k|^2 - it k^2}.
\end{align} 

\noi
As $(c_k) \in \l^1$, it is easy to see that $u_{\rm app}$ is well-defined globally in $t$ almost surely.
Furthermore, we remark that $a(t)$ and $u_{\rm app} (t)$ given in \eqref{a} and \eqref{uapp} are invariant under Fourier-Lebesgue norms, i.e. 
\begin{align}\label{invariant}
\| a(t) \|_{\FL^p} = \| a(0) \|_{\FL^p}, \quad \| u_{\rm app} (t) \|_{\FL^p} = \| u_{\rm app} (0) \|_{\FL^p} ,
\end{align}

\noi 
for all $p \in [1, \infty]$.

We also have the following LDP result with the Fourier-Lebesgue norm.

\begin{proposition}
\label{PROP:uapp2}
Given $z_0 > 0$ and $t > 0$,
we have that\begin{align}
\label{LDP_uapp2}
\lim_{\eps\to 0^+} \eps \log \PP \big(  \| u_{\rm app} (t,x) \|_{\FL^1 (\T)} 
\ge z_0 \eps^{-1/2} \big) = - \frac{ z_0^2}{\sum_{k \in \Z} c_k^2},
\end{align}

\noi 
if and only if $(c_k) \in \l^1$.
\end{proposition}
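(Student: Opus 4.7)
The plan is to reduce Proposition~\ref{PROP:uapp2} entirely to the linear result Proposition~\ref{PROP:linear1} by exploiting the $\FL^1$-invariance \eqref{invariant}. The point is that the explicit formula \eqref{uapp} writes the $k$-th Fourier coefficient of $u_{\rm app}(t,x)$ as
\[
\widehat{u_{\rm app}}(t,k) \;=\; e^{2it\eps^2 m}\, e^{it\eps^2 c_k^2 |g_k|^2 - itk^2}\, c_k g_k,
\]
where the mass $m = \sum_k c_k^2 |g_k|^2$ is real (and finite almost surely under $(c_k)\in\l^1\subset\l^2$, which is precisely the regime where $u_{\rm app}$ is defined). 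Since the three exponential factors are unit-modulus phases, $|\widehat{u_{\rm app}}(t,k)| = c_k |g_k| = c_k R_k$, using the polar decomposition $g_k=R_k e^{i\varphi_k}$ recalled before Lemma~\ref{LEM:unbound}. Summing in $k$ yields the key identity
\[
\|u_{\rm app}(t)\|_{\FL^1(\T)} \;=\; \sum_{k\in\Z} c_k R_k,
\]
which is, crucially, independent of $t$ and has the same distribution as the linear $\FL^1$ norm studied in \eqref{bound2}.

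For the ``if'' direction, assume $(c_k)\in\l^1$. Then the above identity gives
\[
\PP\bigl(\|u_{\rm app}(t)\|_{\FL^1(\T)}\ge z_0\eps^{-1/2}\bigr) \;=\; \PP\Big(\sum_k c_k R_k \ge z_0\eps^{-1/2}\Big),
\]
which is exactly the probability controlled by the G\"artner--Ellis machinery developed in Section~2: Lemma~\ref{LEM:Leps} and the subsequent computation of $\Lambda(\lambda) = \tfrac{\lambda^2}{4}\sum c_k^2$, combined with Theorem~\ref{THM:GE}, produce the limit $-z_0^2/\sum_k c_k^2$. This is precisely the content of Proposition~\ref{PROP:linear1}, so the desired LDP for $u_{\rm app}$ is immediate.

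For the ``only if'' direction, suppose $(c_k)\notin\l^1$. Then Lemma~\ref{LEM:unbound} (which depends only on the distribution of $\sum c_k R_k$, not on the linear dynamics) shows $\sum_k c_k R_k=\infty$ almost surely, so $\PP(\|u_{\rm app}(t)\|_{\FL^1}\ge z_0\eps^{-1/2})=1$ for every $\eps>0$. Consequently
\[
\lim_{\eps\to 0^+}\eps\log\PP\bigl(\|u_{\rm app}(t)\|_{\FL^1(\T)}\ge z_0\eps^{-1/2}\bigr) \;=\; 0 \;\ne\; -\tfrac{z_0^2}{\sum_k c_k^2},
\]
so \eqref{LDP_uapp2} fails. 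Since the entire proof is a transfer from Proposition~\ref{PROP:linear1} via the $\FL^1$-invariance, there is no substantial analytical obstacle; the only thing to be careful about is verifying that the phase factors in \eqref{uapp} really have modulus one, which requires $m$ to be real and finite a.s., and this is automatic as soon as $(c_k)\in\l^2$.
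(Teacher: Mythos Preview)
Your proof is correct and follows essentially the same approach as the paper: both reduce to Proposition~\ref{PROP:linear1} via the $\FL^1$-invariance \eqref{invariant}, noting that $\|u_{\rm app}(t)\|_{\FL^1} = \|u_0\|_{\FL^1} = \sum_k c_k R_k$. Your version is in fact more detailed, since you explicitly verify that the phase factors in \eqref{uapp} have unit modulus and spell out the ``only if'' direction via Lemma~\ref{LEM:unbound}, whereas the paper simply cites \eqref{invariant} and Proposition~\ref{PROP:linear1}.
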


\begin{proof}
We note from \eqref{invariant} that
\[
\PP \big(  \| u_{\rm app} (t,x) \|_{\FL^1 (\T)} 
\ge z_0 \eps^{-1/2} \big) = \PP \big(  \| u (0) \|_{\FL^1 (\T)} 
\ge z_0 \eps^{-1/2} \big) ,
\]

\noi  
since $\|u_{\rm app} (t)\|_{\FL^1} = \|u_{\rm lin} (0)\|_{\FL^1}$, 
where $u_{\rm lin}$ is the linear solution to \eqref{linear}.
Then, \eqref{LDP_uapp2} follows from Proposition \ref{PROP:linear1}.
\end{proof}

\subsection{Bounds on the error}
% We consider the Fourier-Lebesgue space $\FL^p$ defined by the norm
% \begin{align}
% \label{FLnorm}
% \|f\|_{\FL^p} = \bigg( \sum_{k \in \Z} | \ft f (k) |^p \bigg)^{\frac1p}.
% \end{align}

We then have the following.

\begin{proposition}
\label{PROP:local}
If $u_0 \in \FL^1$, then there exists $T_\eps >0$ such that the unique solution to the IVP \eqref{weakNLS1} for all times $0 \le t \le T_\eps$. Moreover
\begin{align}
\label{Teps}
T_\eps \sim \eps^{-2} \|u_0\|_{\FL^1}^{-2}.
\end{align}

\noi 
In particular, if $\|v(t_1)\|_{\F L^1} < \infty$,
then we can extend the unique solution to \eqref{weakNLS1} from $t_1$ to $t_1 + \mathcal O (\eps^{-2} \|v(t_1)\|_{\F L^1}^{-2})$.
\end{proposition}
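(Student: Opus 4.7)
The plan is to run a standard Picard contraction on the Duhamel formulation
\[
u(t) = e^{it\partial_x^2} u_0 - i\eps^2 \int_0^t e^{i(t-s)\partial_x^2} \bigl(|u|^2 u\bigr)(s)\, ds
\]
directly in the Banach space $X_T := C([0,T];\FL^1(\T))$. Two structural facts drive everything. First, since $\widehat{e^{it\partial_x^2} f}(k) = e^{-itk^2}\widehat f(k)$ and $|e^{-itk^2}|=1$, the linear Schr\"odinger group is an $\FL^1$-isometry: $\|e^{it\partial_x^2} f\|_{\FL^1} = \|f\|_{\FL^1}$. Second, because $\widehat{fg} = \widehat f * \widehat g$ and $\ell^1 * \ell^1 \hookrightarrow \ell^1$, the space $\FL^1(\T)$ is a Banach algebra:
\[
\bigl\||u|^2 u\bigr\|_{\FL^1} \le \|u\|_{\FL^1}^3.
\]

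With $R := 2\|u_0\|_{\FL^1}$, define the Duhamel map $\Gamma$ and consider the closed ball $B_R \subset X_T$. Using the two facts above,
\[
\|\Gamma[u]\|_{X_T} \le \|u_0\|_{\FL^1} + \eps^2 T\, \|u\|_{X_T}^3,
\qquad
\|\Gamma[u]-\Gamma[v]\|_{X_T} \le 3\eps^2 T\, \max(\|u\|_{X_T},\|v\|_{X_T})^2\, \|u-v\|_{X_T}.
\]
Choosing $T = T_\eps$ so that $C\eps^2 T R^2 \le \tfrac{1}{2}$ for an absolute constant $C$, i.e.\
\[
T_\eps \sim \eps^{-2}\, \|u_0\|_{\FL^1}^{-2},
\]
we get both the self-mapping property $\Gamma(B_R) \subset B_R$ and a strict contraction, so the Banach fixed point theorem yields a unique solution in $B_R$. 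Uniqueness in the full class $C([0,T_\eps];\FL^1)$ follows by a standard continuity argument: if $u,\tilde u$ are two solutions, their difference satisfies a Gronwall-type inequality in $\FL^1$ on any subinterval where both stay bounded.

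For the extension claim, note that the interaction representation $v(t)=e^{it\partial_x^2}u(t)$ satisfies $\|v(t)\|_{\FL^1} = \|u(t)\|_{\FL^1}$ by the isometry property. Hence a bound on $\|v(t_1)\|_{\FL^1}$ is equivalent to a bound on $\|u(t_1)\|_{\FL^1}$, and we may simply restart the fixed point scheme above with initial datum $u(t_1)$ at time $t_1$, yielding a solution on $[t_1, t_1 + c\,\eps^{-2}\|v(t_1)\|_{\FL^1}^{-2}]$ for an absolute constant $c>0$.

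The argument has no genuine obstacle; the only point to be careful about is that the analysis is closed entirely at the $\FL^1$ level, without invoking Strichartz or $X^{s,b}$ norms. The \emph{main conceptual step} is recognising the combination of the two facts — $\FL^1$ being a Banach algebra and $e^{it\partial_x^2}$ acting isometrically on it — which together make $\FL^1$ the natural space in which the weak-nonlinearity time scale $\eps^{-2}\|u_0\|_{\FL^1}^{-2}$ emerges transparently from the contraction estimate. This is also the reason we can later perturb off the resonant approximation $u_{\rm app}$ in $\FL^1$ rather than in the stronger $\FL^{2,1}$ used in \cite{GGKS21}, thereby accommodating the merely $\ell^1$-summable coefficients.
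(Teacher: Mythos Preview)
Your proposal is correct and follows essentially the same approach as the paper. The paper carries out the a priori estimate on the interaction representation $v = e^{it\partial_x^2}u$ (so the linear isometry is built in) and obtains $\|v(t)\|_{\FL^1} \le \|v(0)\|_{\FL^1} + \eps^2 T \|v\|_{L^\infty_T \FL^1}^3$, then says ``the rest follows''; you work with $u$ directly, make the $\FL^1$-isometry explicit, and spell out the contraction and uniqueness steps, but the content is the same.
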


\begin{proof}
We first note that
\begin{align}
\label{vk}
v_k(t) = v_k(0) - i \eps^2 \int_0^t \sum_{k = k_1 - k_2 + k_3} (v_{k_1} \cj{v_{k_2}} v_{k_3} ) (s) e^{-is \O} ds
\end{align}

\noi 
and thus
\begin{align*}
\begin{split}
\|v (t)\|_{\FL^1} & \le \| v (0)\|_{\FL^1} + \eps^2 \sum_{k \in \Z} \int_0^t \Big| \sum_{k = k_1 - k_2 + k_3} (v_{k_1} \cj{v_{k_2}} v_{k_3} ) (s) \Big| ds \\
& \le \| v (0)\|_{\FL^1} + \eps^2 \sum_{k_1, k_2, k_3} \int_0^t \big|(v_{k_1} \cj{v_{k_2}} v_{k_3} ) (s) \big| ds \\
& \le \| v (0)\|_{\FL^1} + \eps^2 T  \|v (t)\|_{L^\infty_T \FL^1}^3.
\end{split}
\end{align*}

\noi 
The rest then follows.
\end{proof}

We have the following perturbation result:

\begin{proposition}
\label{PROP:pert}
Suppose that $u(t_1)$, $u_{\rm app} (t_1) \in \FL^1$,
and that $t_1 < t \le T_\eps$ as above. 
Then there exists some positive constant $C$ independent of $\eps$, $T_\eps$ and the initial data such that for all time $t_1 < t \le T_\eps$ the following inequality holds:
\begin{align}
\label{pert0}
\begin{split}
\|u(t) - & u_{\rm app} (t)\|_{\FL^{1}} \le \|u(t_1) - u_{\rm app} (t_1)\|_{\FL^{1}} \\
% & + C \eps^4 t \Big( \|u - u_{\rm app}\|_{L^\infty ([0,t), \FL^{1})}^5 + \| u_{\rm app} (0)\|_{\FL^{1}}^5 \Big)\\
& + C \eps^2  (t-t_1)  \| u_{\rm app} (t_1)\|_{L^2(\T)}^2  \|u - u_{\rm app}\|_{L^\infty ([t_1,t), \FL^{1})}\\
& + C \eps^2 \|u (t_1)\|_{L^2(\T)}^2 \big( \|w(t) - a(t)\|_{\FL^1} + \|a (t_1) \|_{\F L^1} + \|w (t_1) \|_{\F L^1} \big) \\
& + C \eps^4 (t-t_1) \|u (t_1)\|^2_{L^2 (\T)} \Big( \|u - u_{\rm app}\|_{L^\infty ([t_1,t), \FL^{1})}^3 + \| u_{\rm app} (t_1)\|_{\FL^{1}}^3 \Big). 
\end{split}
\end{align}

\noi 
Similarly, we have
\begin{align}
\label{pert01}
\begin{split}
\|u(t) - & u_{\rm app} (t)\|_{L^\infty (\T)} \le \|u(t_1) - u_{\rm app} (t_1)\|_{L^\infty (\T)} \\
% & + C \eps^4 t \Big( \|u - u_{\rm app}\|_{L^\infty ([0,t), \FL^{1})}^5 + \| u_{\rm app} (0)\|_{\FL^{1}}^5 \Big)\\
& + C \eps^2  (t-t_1)  \| u_{\rm app} (t_1)\|_{L^2(\T)}^2  \|u - u_{\rm app}\|_{L^\infty ([t_1,t), \FL^{1})}\\
& + C \eps^2 \|u (t_1)\|_{L^2(\T)}^2 \big( \|w(t) - a(t)\|_{\FL^1} + \|a (t_1) \|_{\F L^1} + \|w (t_1) \|_{\F L^1} \big) \\
& + C \eps^4 (t-t_1) \|u (t_1)\|^2_{L^2 (\T)} \Big( \|u - u_{\rm app}\|_{L^\infty ([t_1,t), \FL^{1})}^3 + \| u_{\rm app} (t_1)\|_{\FL^{1}}^3 \Big). 
\end{split}
\end{align}
\end{proposition}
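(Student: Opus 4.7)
The plan is to derive a Duhamel identity for $r_k := w_k - a_k$ by subtracting \eqref{Rapp} from \eqref{NLS3} and integrating in time from $t_1$ to $t$:
\begin{align*}
r_k(t) = r_k(t_1) - i\eps^2 \int_{t_1}^{t} \sum_{\substack{k_1-k_2+k_3=k \\ \O \neq 0}} w_{k_1} \cj{w_{k_2}} w_{k_3}\, e^{is\O}\, ds + i\eps^2 \int_{t_1}^t \big(|w_k|^2 w_k - |a_k|^2 a_k\big)\, ds.
\end{align*}
Because the passages $u \lrarrow v \lrarrow w$ and $u_{\rm app} \lrarrow a$ are Fourier multipliers of modulus one, we have $\|u(t)-u_{\rm app}(t)\|_{\FL^1} = \|r(t)\|_{\l^1}$ and $\|u(t)-u_{\rm app}(t)\|_{L^\infty} \le \|r(t)\|_{\l^1}$, so the problem reduces to estimating $\|r(t)\|_{\l^1}$.

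Next, I would perform a normal-form integration by parts in time on the non-resonant integral, using $e^{is\O} = (i\O)^{-1} \pa_s e^{is\O}$, which is legitimate since $\O = 2(k_1-k)(k_1-k_2)$ yields $|\O| \ge 2$ on the summation region. This produces two boundary contributions of the form $\eps^2 \O^{-1} w_{k_1}\cj{w_{k_2}} w_{k_3}\, e^{is\O}$ at $s=t$ and $s=t_1$, together with a remainder integral in which $\pa_s w_{k_j}$ is substituted using \eqref{NLS3}; this substitution contributes an additional $\eps^2$ and a trilinear structure, hence a quintic expression and the overall $\eps^4(t-t_1)$ scaling of the last term of \eqref{pert0}.

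The crux of the proof will be the one-dimensional multilinear estimate
\begin{align*}
\sum_{k\in\Z} \bigg|\, \sum_{\substack{k_1-k_2+k_3=k \\ \O\neq 0}} \frac{1}{\O}\, f_{k_1}\, \cj{g_{k_2}}\, h_{k_3}\,\bigg| \les \|f\|_{\l^2}\, \|g\|_{\l^1}\, \|h\|_{\l^2},
\end{align*}
which I would establish by bounding $|\O|^{-1} \le \tfrac12 |k_1-k_2|^{-1}|k_2-k_3|^{-1}$ (using $|k_1-k|=|k_2-k_3|$), applying Cauchy--Schwarz in $k_1$ and $k_3$ around the common variable $k_2$, and invoking $\sum_{j\neq 0} j^{-2} < \infty$. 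Applied to the boundary terms together with mass conservation $\|w(s)\|_{L^2} = \|u(t_1)\|_{L^2}$ and the invariance $\|a(s)\|_{\l^1} = \|a(t_1)\|_{\l^1}$ from \eqref{invariant}, followed by the decomposition $w(t) = a(t) + (w(t)-a(t))$ to distribute the $\l^1$ factor, this yields the $C\eps^2 \|u(t_1)\|_{L^2}^2 \big(\|w(t)-a(t)\|_{\FL^1} + \|a(t_1)\|_{\FL^1} + \|w(t_1)\|_{\FL^1}\big)$ contribution. A quintic analogue of the same estimate controls the remainder integral; decomposing each of the three innermost $w$-factors via $w = a + (w-a)$ and using $\|a\|_{\FL^1} = \|u_{\rm app}(t_1)\|_{\FL^1}$ then gives the quintic term in \eqref{pert0}.

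Finally, for the resonant diagonal difference $|w_k|^2 w_k - |a_k|^2 a_k$, I would factor algebraically so that each summand carries a factor of $w_k - a_k$ or $\cj{w_k - a_k}$; summing in $k$, using $\sup_k |r_k| \le \|r(t)\|_{\l^1}$ and mass conservation produces the $C\eps^2(t-t_1)\|u_{\rm app}(t_1)\|_{L^2}^2 \|u-u_{\rm app}\|_{L^\infty_T \FL^1}$ piece. The $L^\infty$ bound \eqref{pert01} then follows at once from $\|\cdot\|_{L^\infty(\T)} \le \|\cdot\|_{\FL^1(\T)}$ applied to the left-hand side. The hard part will be the multilinear estimate: a naive $\|w\|_{\FL^1}^3$ bound (discarding $1/\O$) would lose the $L^2$ factors and spoil the $\eps$-scaling, so one must exploit the $\l^2$-summability of $|\O|^{-1}$ along each frequency direction, which is precisely where the one-dimensionality of the problem enters.
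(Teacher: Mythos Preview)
Your proposal is correct and follows essentially the same route as the paper: Duhamel for $w_k-a_k$, normal-form integration by parts on the non-resonant sum exploiting the factorisation $\O=2(k_1-k_2)(k_2-k_3)$ (equivalent to your $2(k_1-k)(k_1-k_2)$), the trilinear $\ell^2\times\ell^1\times\ell^2$ bound via Cauchy--Schwarz in $k_1,k_3$ around $k_2$, and the algebraic factoring of $|w_k|^2w_k-|a_k|^2a_k$ together with mass conservation. The paper likewise derives \eqref{pert01} from \eqref{pert0} via the embedding $\FL^1\hookrightarrow L^\infty$.
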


\begin{proof}
We only prove \eqref{pert0}, as \eqref{pert01} follows from \eqref{pert0} since the embedding $\FL^1 \embeds L^\infty$.
Recall that $w_k$ is the solution to the gauged interaction form of NLS \eqref{NLS3} and $a_k$ is the solution to the interaction form of the resonant approximation \eqref{Rapp}.
We consider the difference
\begin{align}
\label{pert1}
i \partial_t (w_k - a_k) = \eps^2 (|a_k|^2 a_k - |w_k|^2 w_k) + \eps^2 \sum_{k=k_1 - k_2 + k_3, \O \neq 0}  w_{k_1} \cj{w_{k_2}} w_{k_3} e^{-it \O},
\end{align}

\noi 
where $\O$ is given by \eqref{Omega}. By integrating \eqref{pert1} from $t_1$ to $t$, we find that
\begin{align}
\label{pert2}
\begin{split}
i(w_k (t) - a_k (t)) = i & (w_k (t_1) - a_k (t_1)) + \eps^2 \int_{t_1}^t (|a_k|^2 a_k - |w_k|^2 w_k) (s) ds \\
& + \eps^2 \sum_{ \substack{k=k_1 - k_2 + k_3\\\O \neq 0}} \int_{t_1}^t ( w_{k_1} \cj{w_{k_2}} w_{k_3} ) (s) e^{-is \O} ds.
\end{split}
\end{align}

\noi 
In what follows, we shall estimate the right-hand side of \eqref{pert2}.
We first consider the second term on the right-hand side of \eqref{pert2}.
By the $L^2$ conservation of \eqref{NLS3} and \eqref{Rapp}, and H\"older inequality, we have that
\begin{align}
\label{pert3}
\begin{split}
\eps^2 & \sum_k \Big| \int_{t_1}^t (|a_k|^2 a_k - |w_k|^2 w_k) (s) ds \Big| \\
% & \le \eps^2 \sum_k  \int_{t_1}^t (|a_k|^2 + |w_k|^2) | a_k - w_k| ds \\
& \le 2 \eps^2 \int_{t_1}^t (\|a (s) \|_{\FL^2}^2 + \|w (s)\|_{\FL^2}^2) \| a (s) - w (s)\|_{\FL^\infty} ds \\
% & = \eps^2 (\|a (0) \|_{\FL^2}^2 + \|w (0)\|_{\FL^2}^2) \int_0^t \| a (s) - w (s)\|_{\FL^\infty} ds \\
& \le 4 \eps^2  \|u (t_1)\|_{L^2(\T)}^2 \int_{t_1}^t \| a (s) - w (s)\|_{\FL^1} ds.
% & \le \eps^2 t \big( \|a  \|_{L^2 ( [0,t); \FL^1)}^2 + \|w \|_{L^2 ( [0,t); \FL^1)}^2 \big) \| a  - w \|_{L^\infty ( [0,t); \FL^1)}\\
% & \le \eps^2 t \big( 2\|a \|_{L^\infty ( [0,t); \FL^1)}^2 + \|w - a\|_{L^\infty ( [0,t); \FL^1)}^2 \big) \| a  - w \|_{L^\infty ( [0,t); \FL^1)}\\
% & = \eps^2 t \big( 2\|u_{\rm app} (0) \|_{\FL^\infty}^2 + \|w - a\|_{L^\infty ( [0,t); \FL^1)}^2 \big) \| a  - w \|_{L^\infty ( [0,t); \FL^1)},
\end{split}
\end{align}

\noi 
which gives the second term of \eqref{pert0}.

Now, we turn to the third term on the right-hand side of \eqref{pert2}. 
By integrating by parts, 
we note that
\begin{align}
\label{pert4}
\begin{split}
\int_{t_1}^t ( w_{k_1} \cj{w_{k_2}} w_{k_3} ) (s) e^{-is \O} ds & = - \frac1{i \O} (w_{k_1} \cj{w_{k_2}} w_{k_3} ) (s) e^{-is \O} \Big|_{s={t_1}}^{s =t} \\
& \hphantom{X} + \frac1{i\O} \int_{t_1}^t \partial_s ( w_{k_1} \cj{w_{k_2}} w_{k_3} ) (s) e^{-is \O} ds.
\end{split}
\end{align}

\noi 
Recall the factorisation  $\O = 2(k_1 - k_2)(k_2 - k_3)$. Then, the boundary term of \eqref{pert4} can be estimated as follows:
\begin{align}
\label{pert5}
\begin{split}
\eps^2 & \sum_k \Bigg| \sum_{\substack{k = k_1 - k_2 + k_3\\ \O \neq 0}} \frac1{\O} \big[ (w_{k_1} \cj{w_{k_2}} w_{k_3} ) (t) e^{-it \O} - (w_{k_1} \cj{w_{k_2}} w_{k_3} ) ({t_1}) e^{-i t_1 \Omega} \big] \Bigg| \\
& \les \eps^2 \sum_{k_2 \neq k_1, k_3} \frac1{\jb{k_2 - k_1} \jb{k_2-k_3}} \bigg( \prod_{j=1}^3 |w_{k_j} (t)| +  \prod_{j=1}^3 |w_{k_j} (t_1)| \bigg) \\
& \les \eps^2 \big( \|w_k (t)\|_{\l^2}^2 \|w_k (t) \|_{\l^1} + \|w_k (t_1) \|_{\l^2}^2 \|w_k (t_1) \|_{\l^1} \big) \\
& \les \eps^2 \|u (t_1)\|_{L^2(\T)}^2 \big( \|w (t) \|_{\F L^1} + \|w (t_1) \|_{\F L^1} \big) \\
& \les \eps^2 \|u (t_1)\|_{L^2(\T)}^2 \big( \|w(t) - a(t)\|_{\FL^1} + \|a (t_1) \|_{\F L^1} + \|w (t_1) \|_{\F L^1} \big),
\end{split}
\end{align}

\noi 
which gives the third term of \eqref{pert0}.
In \eqref{pert5}, we used the $L^2$ conservation of $w(t)$ and $\F L^1$ conservation of $a(t)$.
Finally, we consider the second term in \eqref{pert4}. We first note from \eqref{NLS3} that
\begin{align}
\label{pert6}
\begin{split}
\|\partial_s w_k\|_{L^\infty ( [t_1,t); \FL^1)} \le 2 \eps^2 \|w_k\|_{L^\infty ( [t_1,t); \FL^1)}^3. 
\end{split}
\end{align}

\noi 
There are three cases depending on which term $\partial_s$ hits; we only consider one as an example.
% Then, it follows that
\begin{align}
\label{pert7}
\begin{split}
\eps^2 & \sum_k \Big| \sum_{\substack{k = k_1 - k_2 + k_3, \\ \O \neq 0}} \frac1{\O} \int_{t_1}^t ( \partial_s w_{k_1}) \cj{w_{k_2}} w_{k_3}  e^{-i s \O} ds \Big| \\
& \les \eps^2 \int_{t_1}^t \sum_{k_2 \neq k_1,k_3} \frac1{\jb{k_2 - k_1} \jb{k_2-k_3}} | \partial_s w_{k_1} (s) {w_{k_2}} (s) w_{k_3} (s)| ds \\
& \les \eps^2 \int_{t_1}^t \|w (s)\|^2_{L^2 (\T)} \| \partial_s w (s)\|_{\F L^1} ds \\
% & \les \eps^2 t \|w (0)\|^2_{L^2 (\T)} \|\partial_s w \|_{L^\infty ( [0,t); \FL^1)} \\
% & \le \eps^4 t \|w \|_{L^\infty ( [0,t); \FL^1)}^5 \\
& \les \eps^4 (t-t_1) \|u ({t_1})\|^2_{L^2 (\T)} \Big( \|u - u_{\rm app}\|_{L^\infty ([{t_1},t), \FL^{1})}^3 + \| u_{\rm app} ({t_1})\|_{\FL^{1}}^3 \Big),
\end{split}
\end{align}

\noi 
which gives the fourth term in \eqref{pert0} and thus finish the proof.
\end{proof}

As a consequence of the above proposition, we have the following.

\begin{proposition} 
\label{PROP:p}
 Fix $\dl \in (0,1)$, $n \in \mathbb N$, and $d_1, d_2 > 0$.
 Then there exists $\eps_0$ depending on $\dl, n, d_1$ and $d_2$,
 and $\ld >0$ depending on $d_2$,
 such that the following holds for all $\eps \le \eps_0$.
 If $T = \mathcal O ( \eps^{-1} |\log \eps|)$ and $\|u_0\|_{\FL^{1}} \le d_2 \eps^{-1/2}$, then
 \begin{itemize}
     \item[(i)\,\,] we can extend the time where 
     \eqref{pert0} is valid from $T_\eps \sim \eps^{-2} \|u_0\|_{\FL^{1}}^{-2}$ all the way to $T$; and
     \item[(ii)] we have the estimate 
     \begin{align}
        \label{p2}
         \|u - u_{\rm app}\|_{L^\infty ([0,T], \FL^{1})} < \eps^{-1/2 + \dl}.
     \end{align}
 \end{itemize} 
\end{proposition}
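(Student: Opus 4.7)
The plan is to establish Proposition~\ref{PROP:p} by a bootstrap/continuity argument, in the spirit of \cite[Section~4]{GGKS21}, that iteratively applies the perturbation estimate of Proposition~\ref{PROP:pert} on a slicing of $[0,T]$ into $\mathcal{O}(|\log \eps|)$ subintervals of length $T_\eps \sim \eps^{-1}$. Define the stopping time
\[
T^* = \sup\big\{\tau \in [0,T] : u \text{ exists on } [0,\tau] \text{ and } \|u - u_{\rm app}\|_{L^\infty([0,\tau]; \FL^1)} < \eps^{-1/2+\dl}\big\}.
\]
On $[0, T^*)$, the $\FL^1$-invariance \eqref{invariant} of $u_{\rm app}$ together with the hypothesis $\|u_0\|_{\FL^1} \le d_2 \eps^{-1/2}$ gives $\|u(s)\|_{\FL^1} \le \|u_{\rm app}(s)\|_{\FL^1} + \eps^{-1/2+\dl} \le 2 d_2 \eps^{-1/2}$ for $\eps$ small. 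Proposition~\ref{PROP:local} then guarantees that from any $t_1 \in [0,T^*)$ the solution extends forward by at least $\gtrsim \eps^{-1}$, which proves (i) provided we verify $T^* = T$.

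To close the bootstrap, slice $[0, T^*]$ into intervals $[t_{i-1}, t_i]$ with $t_i - t_{i-1} \asymp \eps^{-1}$, so that the number of intervals satisfies $N \lesssim |\log \eps|$. Set $E_i = \|u(t_i) - u_{\rm app}(t_i)\|_{\FL^1}$ and $S_i = \sup_{t \in [t_{i-1}, t_i]} \|u(t) - u_{\rm app}(t)\|_{\FL^1}$. Applying Proposition~\ref{PROP:pert} on $[t_{i-1}, t_i]$, invoking the $L^2$-conservation of \eqref{NLS1} (which bounds $\|u(t_1)\|_{L^2}$ and $\|u_{\rm app}(t_1)\|_{L^2}$ by a constant governed by $d_1$), the $\FL^1$-invariance $\|u_{\rm app}(t_i)\|_{\FL^1} = \|u_0\|_{\FL^1} \le d_2 \eps^{-1/2}$, and the bootstrap control $S_i \le \eps^{-1/2+\dl}$, one estimates the three error terms on the right-hand side of \eqref{pert0}: the term on line two by $C d_1 \eps \cdot S_i$; the term on line three by $C d_1 \eps^{2} (S_i + d_2 \eps^{-1/2}) \lesssim \eps^{3/2}$; and the term on line four by $C d_1 \eps^{3} (S_i^3 + d_2^3 \eps^{-3/2}) \lesssim \eps^{3/2}$. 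Taking the supremum over $[t_{i-1}, t_i]$ and absorbing the $C d_1 \eps S_i$ contribution onto the left-hand side (valid for $\eps$ small), one arrives at the one-step recursion
\[
E_i \le S_i \le (1 + C d_1 \eps)\, E_{i-1} + C \eps^{3/2}.
\]

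Iterating from $E_0 = 0$ yields
\[
E_N \le C \eps^{3/2} \cdot \frac{(1 + C d_1 \eps)^N - 1}{C d_1 \eps} \lesssim \eps^{1/2} \cdot N \cdot e^{C d_1 \eps N}.
\]
Because $\eps N \lesssim \eps |\log \eps| \to 0$ as $\eps \to 0^+$ provided the hidden constant $\ld$ in $T \le \ld \eps^{-1}|\log \eps|$ is chosen small in terms of $d_1, d_2$, the exponential factor stays bounded and the total bound is $\lesssim \eps^{1/2} |\log \eps| \ll \eps^{-1/2+\dl}$ once $\eps \le \eps_0$. Hence $S_i \ll \eps^{-1/2+\dl}$ uniformly in $i$, contradicting $T^* < T$ by continuity of $t \mapsto \|u(t) - u_{\rm app}(t)\|_{\FL^1}$; therefore $T^* = T$, yielding both (i) and (ii).

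The main obstacle is the self-referential term $\|u - u_{\rm app}\|_{L^\infty([t_1,t); \FL^1)}$ on the right-hand side of \eqref{pert0}: each application produces a geometric factor $(1 + C d_1 \eps)^N$ whose exponent $C d_1 \eps N \sim C d_1 \eps |\log \eps|$ must remain bounded uniformly as $\eps \to 0^+$, which is precisely why the threshold $T = \mathcal{O}(\eps^{-1}|\log \eps|)$ is critical and why the implicit constant $\ld$ (playing the role of the Lyapunov threshold) must be small depending on $d_2$. A secondary subtlety is the handling of the cubic term: although $\|u_{\rm app}(t_1)\|^3_{\FL^1} \sim d_2^3 \eps^{-3/2}$ is large, it is multiplied by $\eps^4 (t-t_1) \sim \eps^3$ in \eqref{pert0}, yielding only a harmless $\eps^{3/2}$ contribution whose uniform-in-time control relies on the $\FL^1$-invariance \eqref{invariant} of $u_{\rm app}$, which in turn is available only because the initial data is assumed to live in $\FL^1$ (i.e.\ $(c_k) \in \l^1$).
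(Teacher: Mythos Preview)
Your overall strategy --- a bootstrap/continuity argument iterating Proposition~\ref{PROP:pert} over $\mathcal O(|\log\eps|)$ subintervals --- is exactly the paper's approach. However, there is a genuine scaling error: you claim that $L^2$-conservation bounds $\|u(t_1)\|_{L^2}$ and $\|u_{\rm app}(t_1)\|_{L^2}$ by ``a constant governed by $d_1$'', but the only hypothesis in the proposition is $\|u_0\|_{\FL^1}\le d_2\eps^{-1/2}$, which yields $\|u_0\|_{L^2}\le d_2\eps^{-1/2}$ and no better. (The parameter $d_1$ in the statement is vestigial and plays no role in the paper's own proof.) With the correct size $\|u\|_{L^2}^2\lesssim d_2^2\eps^{-1}$, the line-two term in \eqref{pert0} on a subinterval of length $\asymp\eps^{-1}$ contributes $Cd_2^2\, S_i$, not $Cd_1\eps\,S_i$, and the terms on lines three and four contribute $O(\eps^{1/2})$, not $O(\eps^{3/2})$. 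The resulting recursion $S_i\le E_{i-1}+Cd_2^2 S_i+O(\eps^{1/2})$ cannot be closed as written, since $Cd_2^2$ is not small.

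The paper's remedy is precisely where the constant $\ld$ ``depending on $d_2$'' enters: one takes the step size to be $\ld\eps^{-1}$ with $\ld\le \tfrac1{8Cd_2^2}$, so that the self-referential term becomes $C\ld d_2^2 S_i\le \tfrac18 S_i$ and can be absorbed, yielding $S_i\le \tfrac{8}{7}\bigl(E_{i-1}+O(\eps^{1/2})\bigr)$. The error then grows geometrically (the paper simply bounds it by a factor $2$ per step), and after $n=\mathcal O(|\log\eps|)$ steps one obtains $2^{n}\eps^{1/2}\lesssim \eps^{-1/2+\dl}$, which is exactly the target \eqref{p2}. In particular, the correct final bound on the error is $\eps^{-1/2+\dl}$, not the much smaller $\eps^{1/2}|\log\eps|$ that your recursion produces; your estimate would only be valid under the additional hypothesis $\|u_0\|_{L^2}=O(1)$, which is not assumed here.
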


\begin{proof}
We first note that \eqref{pert0} is valid for any time $t$ such that $u(t) \in \FL^1$, which is guaranteed for all $t$ such that $t \le T_\eps$.
We also note that $T_\eps \sim \eps^{-2}$. 
We shall extend the existence time from $T_\eps$ to $T = n \ld \eps^{-1}$.
% However, as $d_1$ is arbitrarily chosen, we need to extend $T_\eps \sim \eps^{-2}$ to $d_1 \eps^{-1}$.

We start by setting $u(0) = u_{\rm app} (0)$.
Then we define
\[
\tau_1 : = \sup \{ t \in [0,T]: \| u(t) - u_{\rm app} (t) \|_{\FL^1} < \eps^{-1/2 + 2\dl}\},
\]

\noi 
for some small $\dl \in (0,\frac12)$.
If $t \le \tau_1$ we then have
\[
\begin{split}
\|u(t) \|_{\FL^1} & \le \|u(t) - u_{\rm app} (t) \|_{\FL^1} + \|u_{\rm app} (t) \|_{\FL^1} \\
&  \le \|u(t) - u_{\rm app} (t) \|_{\FL^1} + \|u_{\rm app} (0) \|_{\FL^1}  \\
&  \le \eps^{-1/2 + 2\dl} + d_2 \eps^{-1/2}, 
\end{split}
\]

\noi 
and thus $u (t) \in \FL^1$ for $t \le \tau_1$,
which implies that \eqref{pert0} is are valid till $\tau_1$.
Moreover, $u(t)$ must be continuous in time to live in $\FL^1$ thanks to the local well-posedness.

If $t \le \min \{\tau_1, \ld \eps^{-1}\}$ for some $\ld > 0$ to be determined later,
then from \eqref{pert0} with $t_1 = 0$ we have
\[
\begin{split}
\| u(t) - u_{\rm app} (t) \|_{\FL^1} & \le C d_2^2 \eps^3 t (\eps^{-3/2 + 6\dl} + d_2^3 \eps^{-3/2}) \\
& \hphantom{X} + C d_2^2 \eps t \|u - u_{\rm app}\|_{L^\infty ([0,t), \FL^{\infty})} \\
& \hphantom{X} + C \eps^2 (\eps^{-3/2 + 6\dl} + 2d_2^3 \eps^{-3/2}) \\ 
 & \le C \ld \eps^{\frac12} (\eps^{6\dl} + d_2^3)  + C \eps^{\frac12} (\eps^{6\dl} + 2d_2^3 ) + C \ld d_2^2 \eps^{-\frac12 + 2\dl}  \\
% & \le 2 C \ld d_2^2 \eps^{-1/2 + \dl} \\
& \le \frac14 \eps^{-1/2 + 2\dl},
\end{split}
\]

\noi 
provided $\eps \ll 1$ and $\ld \le \frac1{8C d_2^2}$.
This together with the definition of $\tau_1$ implies that $\tau_1 \ge \ld \eps^{-1}$.
Therefore, we have shown that
\begin{align}
\label{p3}
\| u(t) - u_{\rm app} (t) \|_{\FL^1} < \eps^{-1/2 + 2\dl}
\end{align}

\noi 
holds for all $0 \le t \le \ld \eps^{-1}$, provided $\eps \gg 1$.

Let us fix $t_1 : = \ld \eps^{-1}$,
and restart our equation for time $t > t_1$.
We would like o show that we may reach times $t_2 = 2 \ld \eps^{-1}$ while the error is still not too big.
To do this, let 
\[
\tau_2 : = \sup \{ t \in [t_1,T]: \| u(t) - u_{\rm app} (t) \|_{\FL^1} < 2 \eps^{-1/2 + 2 \dl}\}.
\]

\noi 
First, we note that for $t \le \tau_2$ we have
\begin{align}
\label{p4}
\begin{split}
\|u(t) \|_{\FL^1} & \le \|u(t) - u_{\rm app} (t) \|_{\FL^1} + \|u_{\rm app} (t_1) \|_{\FL^1} \\
&  \le 2 \eps^{-1/2 + 2\dl}  + \|u_{\rm app} (0) \|_{\FL^1}  \\
&  \le 2 \eps^{-1/2 + 2\dl} + d_2 \eps^{-1/2}, 
\end{split}
\end{align}

\noi 
which guarantees the existence of $u(t)$ after $t_1$ and up to $\tau_2$ by Proposition \ref{PROP:local}.
Then, we consider $t_1 \le t \le \min \{\tau_2, t_1 + \ld \eps^{-1}\}$ for some $\ld_2 > 0$ to be fixed later.
We then use \eqref{pert0} with initial data at $t_1$ instead of $0$ to get
\begin{align} 
\label{iter1}
\begin{split}
\|u(t) - & u_{\rm app} (t)\|_{\FL^{1}} 
\le \|u(t_1) - u_{\rm app} (t_1)\|_{\FL^{1}} \\
& \hphantom{X} + C \eps^4 (t-t_1) \| u_{\rm app} (t_1)\|_{L^2(\T)}^2 \Big( \|u - u_{\rm app}\|_{L^\infty ([t_1,t), \FL^{1})}^3 + \| u_{\rm app} (t_1)\|_{\FL^{1}}^3 \Big)\\
&  \hphantom{X} + C \eps^2 (t-t_1) \| u_{\rm app} (t_1)\|_{L^2(\T)}^2 \|u - u_{\rm app}\|_{L^\infty ([t_1,t), \FL^{1})}\\
&  \hphantom{X} + C \eps^2 \| u_{\rm app} (t_1)\|_{L^2(\T)}^2 \Big( \|u - u_{\rm app}\|_{L^\infty ([t_1,t), \FL^{1})} + \| u_{\rm app} (t_1)\|_{\FL^{1}} + \|u(t_1)\|_{\FL^{1}} \Big)\\
& \le \eps^{-1/2 + 2\dl} + C \eps^{\frac12} \ld (2^3 \eps^{6\dl} + d_2^3 )  + C \ld  d_2^2 \|u - u_{\rm app}\|_{L^\infty ([t_1,t), \FL^{1})} \\
& \hphantom{X}  + C d_2^2 \eps^{\frac12} (2^3 \eps^{ 2\dl} + d_2  + (\eps^{ 2\dl} + d_2) ) \\
& \le \eps^{-1/2 + 2\dl} +  \frac14 \eps^{-1/2 + 2\dl} + C \ld  d_2^2 \|u - u_{\rm app}\|_{L^\infty ([t_1,t), \FL^{1})},
\end{split}
\end{align}

\noi 
by choosing $\eps \ll 1$ even smaller. 
Recall that $C \ld d_2^2 < \frac18$.
Then we have from \eqref{iter1} that
\begin{align}
\label{iter2}
\|u - u_{\rm app}\|_{L^\infty ([t_1,t], \FL^{1})} \le \frac{10}7 \eps^{-1/2 + 2\dl} < 2 \eps^{-1/2 + 2\dl},
\end{align}

\noi 
holds for all $t_1 \le t \le \min\{\tau_2, t_1 + \ld \eps^{-1}\}$.
In particular, the above argument shows that $\tau_2 \ge t_1 + \ld \eps^{-1} = 2 \ld \eps^{-1}$, i.e. we have that 
\begin{align}
\label{p5}
\|u(t) - u_{\rm app} (t)\|_{\FL^{1}} 
< 2 \eps^{-1/2 + 2\dl},
\end{align}

\noi 
holds for all $0\le t \le 2\ld \eps^{-1}$.

By repeating the above procedure and choosing $t_n = n \ld \eps^{-1}$ and
\[
\tau_n : = \sup \{ t \in [t_{n-1},T]: \| u(t) - u_{\rm app} (t) \|_{\FL^1} < 2^{n-1} \eps^{-1/2 + 2\dl}\},
\]

\noi 
similar argument as in \eqref{iter1} yields
\begin{align}
\label{iter11}
\|u(t) - u_{\rm app} (t)\|_{\FL^{1}} 
\le 2^{n-2} \eps^{-1/2 + 2\dl} +  \frac14 \eps^{-1/2 + 2\dl} + C \ld  d_2^2 \|u - u_{\rm app}\|_{L^\infty ([t_1,t), \FL^{1})}
\end{align}

\noi 
for $t \in [t_{n-1}, \min\{\tau_n, t_{n-1} + \ld \eps^{-1}) $. 
Recall that $C \ld d_2^2 < \frac18$.
Then we have
\begin{align}
\label{iter3}
\|u - u_{\rm app}\|_{L^\infty ([t_{n-1}, t], \FL^{1})} \le \frac{2 \cdot 2^{n} + 2}7 \eps^{-1/2 + \dl} < 2^n \eps^{-1/2 + 2\dl},
\end{align}

\noi 
holds for all $t_{n-1} \le t \le \min\{\tau_n, t_{n-1} + \ld \eps^{-1}\}$.
In particular, the above argument shows that $\tau_n \ge t_{n-1} + \ld \eps^{-1} = n \ld \eps^{-1}$, i.e. we have that 
\[
\|u(t) - u_{\rm app} (t)\|_{\FL^{1}} 
< 2^{n-1} \eps^{-1/2 + 2\dl},
\]

\noi 
holds for all $0\le t \le n\ld \eps^{-1}$. 

Finally, we shall have
\begin{align}
\label{p51}
\|u(t) - u_{\rm app} (t)\|_{\FL^{1}} 
< \eps^{-1/2 + \dl},
\end{align}

\noi 
provided $2^{n-1} \eps^\dl \le 1$, i.e. $n = \mathcal O( |\log \eps| )$. Therefore, the time scale that \eqref{p5} holds is $t = \mathcal O ( n\ld \eps^{-1}) = \mathcal O ( \eps^{-1} |\log \eps|) $.
Thus, we finish the proof.
\end{proof}

\subsection{Proof for the main theorem}
\label{SUB:mainproof}
For fixed $z_0 > 0$ (independent of $\eps$), $\dl \in (0,1)$, and $n \in \mathbb N$, 
let $t \le n \ld \eps^{-1}$ for some fixed $\ld > 0$ depends on $z_0$.
We want to study the limit
\begin{align}
\label{LDPc}
\lim_{\eps \to 0^+} \eps \log \mathbb P \big( \|u(t,x) \|_{\FL^1 (\T)} > z_0 \eps^{-1/2} \big).
\end{align}

\noi 
Note that the solution $u$ exists at time $t$ and $\| u(t)\|_{\F L^1 (\T)} < \infty$ almost surely by choosing $\eps $ sufficiently small.
Let $\D_\eps$ be the event 
\[
\D_\eps : = \big\{\o \in \O: \|u(t,x) \|_{\FL^1 (\T)}  > z_0 \eps^{-1/2} \big\}. 
\]

We first prove the upper bound for $\mathbb P (\mathcal D_\eps)$.
It is clear that $\D_\eps \subset \D_\eps^+$ where
\[
\D_\eps^+ : = \big\{ \| u_{\rm app}(t)\|_{\FL^1 (\T)} + \| u(t) - u_{\rm app} (t) \|_{\F\!L^{1}} > z_0 \eps^{-1/2} \big\}.
\]

\noi 
Let us also define the event 
\begin{align}\label{Beps}
\B_\eps : = \{\| u(t) - u_{\rm app} (t) \|_{\F\!L^{1}} \le z_0 \eps^{-1/2 + \dl}\} .
\end{align}

\noi 
Then we have that 
\begin{align}
\label{upper}
\mathbb P (\D_\eps) \le \mathbb P (\D_\eps^+ \cap \B_\eps) + \mathbb P (\D_\eps^+ \cap \B_\eps^c) .
\end{align}

\noi 
For the first term in \eqref{upper}, we have
\[
\mathbb P (\D_\eps^+ \cap \B_\eps) \le \PP \big( \| u_{\rm app} (t) \|_{\FL^1 (\T)} > z_0 (\eps^{-1/2} - \eps^{-1/2 + \dl}) \big), 
\]

\noi 
From Proposition \ref{PROP:uapp2}, it follows
\begin{align}
\label{upper1}
\lim_{\eps \to 0^+} \eps \log \PP  \big( \| u_{\rm app} (t) \|_{\FL^1 (\T)} > z_0 (\eps^{-1/2} - \eps^{-1/2 + \dl}) \big) = - \frac{z_0^2}{\sum_{k \in \Z} c_k^2}.
\end{align}

\noi
Next, we consider the second term in \eqref{upper}.
Fix $d_2 > 0$ large enough so that
\begin{align}
\label{upper2}
d_2^2 > 2 z_0^2.
\end{align}

\noi
With Proposition \ref{PROP:p}, if $\eps < \eps_0$ is small enough we can arrange
\[
\mathbb P (\D_\eps^+ \cap \B_\eps^c) \le \mathbb P ( \B_\eps^c ) \le \mathbb P (\|u_0\|_{\FL^{1}} \ge d_2 \eps^{-1/2}).
\]

\noi 
where the second inequality we used Proposition \ref{PROP:p}.
We note that 
\begin{align}
\label{pp3}
\begin{split}
\lim_{\eps \to 0^+} \eps \log \PP (\B_\eps^c) \le \lim_{\eps \to 0^+} \eps \log \PP   &\big( \| u_{0} \|_{\FL^{1}} > d_2 \eps^{-1/2} \big) \le  - \frac{d_2^2}{ \sum_{k \in \Z} c_k^2} \le  - \frac{2z_0^2}{\sum_{k \in \Z} c_k^2}.
\end{split}
\end{align}

\noi 
Thanks to the choice of $d_2$, \eqref{pp3} implies that \eqref{upper1} is asymptotically larger than that of \eqref{pp3}.
In particular, we have
\[
\PP (\mathcal D_\eps) \le 3 \PP  \big( \| u_{\rm app} (t) \|_{\FL^1 (\T)} > z_0 (\eps^{-1/2} -   \eps^{-1/2 + \dl}) \big)
\]

\noi 
for $\eps$ small enough, which concludes the proof of the upper bound due to \eqref{upper1}.

Next, let us consider the lower bound.
First we define
\[
\D_\eps^- : = \big\{ \| u_{\rm app}(t)\|_{\FL^1 (\T)} - \| u(t) - u_{\rm app} (t) \|_{\F\!L^{1}} > z_0 \eps^{-1/2} \big\}.
\]

\noi 
Then we have that
\begin{align}
\label{pp4}
\begin{split}
\PP (\D_\eps) & \ge \PP (\D_\eps^- \cap \B_\eps) \\
& \ge \PP \Big( \big\{ \| u_{\rm app}(t)\|_{\FL^1 (\T)} > z_0 (\eps^{-1/2} + \eps^{-1/2 + \dl} )\big\} \cap \B_\eps \Big) \\
& \ge \PP \Big( \big\{ \| u_{\rm app}(t)\|_{\FL^1 (\T)} > z_0 (\eps^{-1/2} +  \eps^{-1/2 + \dl} )\big\} \Big) - \PP ( \B_\eps^c ).
\end{split}
\end{align}

\noi 
We first note that 
\begin{align}
\label{pp40}
\lim_{\eps \to 0^+} \eps \log \PP  \big( \| u_{\rm app} (t) \|_{\FL^1 (\T)} > z_0 (\eps^{-1/2} + \eps^{-1/2 + \dl}) \big) = - \frac{z_0^2}{\sum_{k \in \Z} c_k^2}.
\end{align}

\noi 
Then from \eqref{pp3}, it shows that $\PP (\B_\eps^c)$ is asymptotically smaller than $\frac 12 \PP  \big( \| u_{\rm app} (t) \|_{\FL^1 (\T)} > z_0 (\eps^{-1/2} + \eps^{-1/2 + \dl}) \big)$. In particular, we can deduce that
\begin{align}
\label{pp6}
\PP (\D_\eps) \ge \frac12 \PP  \big( \| u_{\rm app} (t) \|_{\FL^1 (\T)} > z_0 (\eps^{-1/2} + \eps^{-1/2 + \dl}) \big)
\end{align}

\noi 
for $\eps$ small enough.
Therefore, we conclude the proof by using \eqref{pp40} and \eqref{pp6}.
Furthermore, we note that \eqref{pp40} and \eqref{pp6} together with Proposition \ref{PROP:uapp2} imply the sharpness of the decay condition $(c_k) \in \l^1$ in Theorem \ref{THM:main}.

\subsection{Proof of Corollary \ref{COR:main}}
In this subsection, 
we sketch the proof of Corollary \ref{COR:main}.
Let $u$ be the solution to \eqref{weakNLS1} with initial data $u_0$ in \eqref{random_data}.
Since $\|u\|_{L^\infty (\T)} \le \| u\|_{\FL^1 (\T)}$, we note that
\[
\mathbb P \big( \|u(t,x) \|_{\FL^1 (\T)} > z_0 \eps^{-1/2} \big) \ge \mathbb P \bigg( \sup_{x \in \T} |u(t,x) | > z_0 \eps^{-1/2} \bigg),
\]

\noi 
which implies that 
\begin{align}\label{upperbound}
\limsup_{\eps \to 0^+} \eps \log \PP \Big( \sup_{x} |u(t,x) | > z_0\eps^{-1/2} \Big) \le \lim_{\eps \to 0^+} \eps \log \PP \Big( \|u(t,x) \|_{\FL^1} > z_0\eps^{-1/2} \Big),
\end{align}

\noi 
provided $(c_k) \in \l^1$.

It then suffices to show the lower bound
\begin{align}\label{lowerbound}
\liminf_{\eps \to 0^+} \eps \log \PP \Big( \sup_{x} |u(t,x) | > z_0\eps^{-1/2} \Big) \ge \lim_{\eps \to 0^+} \eps \log \PP \Big( \|u(t,x) \|_{\FL^1} > z_0\eps^{-1/2} \Big).
\end{align}

\noi 
First, we define 
\[
\wt \D_\eps : = \big\{\o \in \O: \|u(t,x) \|_{L^\infty (\T)}  > z_0 \eps^{-1/2} \big\},
\]
and
\[
\wt \D_\eps^- : = \big\{ \| u_{\rm app}(t)\|_{L^\infty (\T)} - \| u(t) - u_{\rm app} (t) \|_{\FL^1 (\T)} > z_0 \eps^{-1/2} \big\}.
\]

\noi 
Then we have that
\begin{align*} 
\begin{split}
\PP (\wt \D_\eps) & \ge \PP (\wt \D_\eps^- \cap \B_\eps) \\
& \ge \PP \Big( \big\{ \| u_{\rm app}(t)\|_{L^\infty (\T)} > z_0 (\eps^{-1/2} + \eps^{-1/2 + \dl} )\big\} \cap \B_\eps \Big) \\
& \ge \PP \Big( \big\{ \| u_{\rm app}(t)\|_{L^\infty (\T)} > z_0 (\eps^{-1/2} +  \eps^{-1/2 + \dl} )\big\} \Big) - \PP ( \B_\eps^c ),
\end{split}
\end{align*}

\noi 
where $\mathcal B_\eps$ is defined in \eqref{Beps}.
We first note, see also \cite[Proposition 4.1]{GGKS21},\footnote{Please note that \cite[Proposition 4.1]{GGKS21} only requires that $\|c_k\|_{\l^1_k} < \infty$.} that 
\begin{align}
\label{pp400}
\lim_{\eps \to 0^+} \eps \log \PP  \big( \| u_{\rm app} (t) \|_{L^\infty (\T)} > z_0 (\eps^{-1/2} +  \eps^{-1/2 + \dl}) \big) = - \frac{z_0^2}{\sum_{k \in \Z} c_k^2}.
\end{align}

\noi 
Then from \eqref{pp3}, it shows that $\PP (\B_\eps^c)$ is asymptotically smaller than $\frac 12 \PP  \big( \| u_{\rm app} (t) \|_{L^\infty (\T)} > z_0 (\eps^{-1/2} + \eps^{-1/2 + \dl}) \big)$. In particular, we can deduce that
\begin{align}
\label{pp60}
\PP (\wt \D_\eps) \ge \frac12 \PP  \big( \| u_{\rm app} (t) \|_{L^\infty (\T)} > z_0 (\eps^{-1/2} + \eps^{-1/2 + \dl}) \big)
\end{align}

\noi 
for $\eps$ small enough.
Therefore, we conclude the proof of \eqref{lowerbound} by using \eqref{pp400} and \eqref{pp60}.

\subsection*{Acknowledgements}

R.L. would like to thank Andrea R. Nahmod for her support and Ricardo Grande for interesting discussions. Y.W. is supported by the EPSRC New Investigator Award (grant no. EP/V003178/1).

\end{document}